\documentclass{amsart}
\usepackage[T1]{fontenc}

\newtheorem{thm}{Theorem}[section]
\newtheorem{lem}[thm]{Lemma}
\newtheorem{prop}[thm]{Proposition}
\theoremstyle{definition}
\newtheorem{defn}[thm]{Definition}

\numberwithin{equation}{section}

\usepackage{latexsym,amssymb,amsmath}

\newcommand{\C}{\mathbb C}
\newcommand{\Z}{\mathbb Z}
\newcommand{\N}{\mathbb N}
\newcommand{\R}{\mathbb R}
\newcommand{\Q}{\mathbb Q}
\newcommand{\F}{\mathbb F}

\newcommand{\sma}{\left(\begin{array}}
\newcommand{\fma}{\end{array}\right)}
\newcommand{\injects}{\hookrightarrow}

\newtheorem{ex}[thm]{Example}
\newtheorem{co}[thm]{Corollary}

\begin{document}

\title[Groups acting on products of locally finite trees]%
{Groups acting on products of locally finite trees}

\author{J.O.Button}
\address{Selwyn College; University of Cambridge;
Cambridge CB3 9DQ;
U.K.}
\email{j.o.button@cam.ac.uk}

\subjclass[2020]{Primary 20F65; Secondary 20E08}

\keywords{group, surface, tree, locally finite}

\begin{abstract}
We examine the question of which finitely generated groups act properly
on a finite product of locally finite simplicial trees
and present
evidence in favour of hyperbolic surface groups having such an action.
We also give a completely explicit embedding of the genus 2 closed hyperbolic
surface group in $SL_2(\F_p(x,y))$ for any prime $p$.
\end{abstract}

\maketitle

\section{\bf Introduction}

A useful technique which can be used to understand a finitely generated group
$G$ is to find a geodesic metric space $X$ on which it acts
geometrically, that is properly and cocompactly by isometries. Of course
every $G$ has such an action on its Cayley graph but
if the space $X$ is well behaved geometrically, for instance if it is a
hyperbolic space or a CAT(0) space, then this allows us to establish
properties for the group $G$.

However it can be quite restrictive asking for the action to be geometric.
Indeed a group $G$ admits a
geometric action on some simply connected geodesic metric space if and
only if $G$ is finitely presented (\cite{bh} I.8.11),
so if $G$ has a geometric action on such a space but $G$ also has a subgroup
$H$ which is finitely generated but
not finitely presented then there is no geometric action of $H$ on any
simply connected geodesic metric space.

One way around this is to allow actions of a group $G$
by isometries which are
proper, but not necessarily cocompact,
on a geodesic metric space $X$. This has the big advantage that every 
subgroup of $G$ acts properly too. On the other hand, this might be
too general:
for instance it is an open question as to whether every countable group
has a proper action on some CAT(0) space. However in this case it is
possible to strengthen the hypotheses on the action without the full force
of a geometric action, since the class of groups having a proper and
semisimple action on a CAT(0) space is an interesting class with good
geometric properties (see for instance \cite{bh} III.$\Gamma$.1.1).

If we think of finite generation/presentability as finiteness conditions
on the group and acting properly/cocompactly as finiteness conditions
on the action, we might wonder about finiteness conditions on the space
$X$. Hence we could also insist that $X$ is a proper metric space (closed
balls are compact). This has the advantage that acting properly and
the stronger notion of acting metrically properly are then equivalent.
We could further insist that $X$ has some form of non-positive or negative
curvature, for instance $X$ is hyperbolic.
But every countable group acts properly on a locally finite
(and hence proper) hyperbolic graph. Alternatively we could specialise
to the case where $X$ is a tree, where here tree means simplicial tree
throughout and group actions are always by simplicial automorphisms
but they may invert edges. However if $G$ is finitely generated then
$G$ acts properly on some tree if and only if $G$ acts properly on some
tree of bounded valence if and only if $G$ is virtually free, thus the class
of groups so obtained is very limited.

Another reason to vary the type of spaces we are considering is that
we would like the notion of acting properly on a space in a given class
to be not just preserved by subgroups but also by finite index supergroups.
With geometric actions this is far from clear, for instance if $H$ is a
CAT(0) group and it has finite index $i$ in $G$, it is unknown whether $G$
must also be CAT(0). However if we merely require a proper action then
as long as our class of spaces is closed under taking direct products
(say with the $\ell_2$ product metric), we can induce an action of $G$
on $X^i$ from an action of $H$ on $X$ and here properness will be
preserved.
Examples of such classes $\mathcal C$ where the question of which groups
act properly on spaces in $\mathcal C$ has been investigated recently
include the class of CAT(0) cube complexes, where one might or might
not restrict to finite dimensional and/or locally finite spaces, and
finite products of quasi-trees.

In this paper our focus is on groups that
act properly (by automorphisms)
on a finite product of trees, where each tree is equipped with
its path metric and the space is equipped with the Euclidean product metric.
This begs the question: should we consider arbitrary trees (in which case
should we further consider proper or metrically proper actions), or only
locally finite trees? In the first case a wide range of groups is known
to have such an action (under either definition of proper) but for locally
finite trees the known examples are much more restricted. Therefore our
emphasis here will be on the locally finite case and seeing how it differs
from the case of arbitrary trees.

We provide the basic background in Section 2 and Lemma \ref{equiv} shows that,
at least for finitely generated groups, it does not matter if we consider
products of locally finite or of bounded valence trees. But the question
of which groups possess proper actions on finite products of locally finite
trees seems quite mysterious. For instance we have virtually free groups and
Burger - Mozes groups (these two examples also act cocompactly), along
with subgroups and direct products of these examples. In 2.3 we look at
finitely generated groups containing an infinite but locally finite subgroup,
such as lamplighter groups $F\wr\Z$ for $F$ finite which also possess such an
action. We also examine the Houghton groups which do not possess any
proper action on a finite product of locally finite trees, although they
do once the locally finite condition is removed.

For any finite non-trivial group $F$, we show in Section 3 that, as opposed
to $F\wr\Z$, the wreath product $F\wr\Z^2$ does not have any proper action
on a finite product of locally finite trees. This result sits alongside
that of \cite{gntr} Proposition 1.3, which shows that $F\wr H$
cannot act properly on any finite dimensional CAT(0) cube complex (which
includes any finite product of trees) if $H$ contains a non-abelian free
group.

But what about any example of a word hyperbolic group with such an action
but which is not virtually free? The obvious case to try would be the
fundamental group
$S_g$ of a closed orientable hyperbolic surface $\Sigma_g$ with genus 
$\geq 2$. In fact exactly this question was raised in \cite{flss} which
gives partial results and which was the motivation for much of this work.
Here we present evidence in favour of the existence of a proper action
for our surface groups $S_g$, by
considering how these groups can act on a single locally finite tree.
In Section 4, Theorem \ref{injcloc} shows that a necessary condition for
such an action by $S_g$ (or indeed any finitely generated, torsion free
group which does not contain $\Z\times\Z$)
is that for any non-identity element $\gamma$,
there is an action of $S_g$ on some locally finite tree which is
minimal, faithful and such that $\gamma$ acts as a loxodromic element.
Note that without the faithful condition we could use the residual
freeness of $S_g$ to obtain a suitable action on the Cayley graph
of a finitely generated free group, whereas without the minimal
condition we could take the same action and convert it into a
faithful action on a different locally finite tree by ``decorating'' the
vertices of this tree.

It is pointed out in \cite{flss} that if we could find an embedding
of $S_g$ in $SL(2,\F)$ for $\F$ a global field of positive characteristic
(a finite degree field extension of $\F_p(x)$ for some prime $p$) then
$S_g$ would act properly on a finite product of locally finite trees, by
taking the Bruhat - Tits trees associated to a finite number of
discrete valuations $v$ on $\F$. Given such a global field $\F$ and 
some $v$, we have the local field $k_v$ obtained by taking
the completion of $\F$ with respect to $v$. Now it is clear
that for any prime $p$, the group $S_g$ embeds in $SL(2,k)$ for some
local field $k$ of characteristic $p$. For instance, using \cite{lubseg}
Window 8 Theorem 1 which was originally due to 
Malceev, the two facts that a finite
rank free group has a 2 dimensional faithful linear representation
in characteristic $p$ and
that $S_g$ is fully residually free implies that $S_g$ also has such a
representation, say over the field $K$. As $S_g$ is finitely generated,
we can take $K$ to be finitely generated over its prime subfield $\F_p$
which means that $K$ is a finite extension of
$\F_p(t_1,\ldots ,t_d)$ where obviously $d>0$ and $t_1,\ldots ,t_d$
are algebraically independent elements. Thus if $k$ is any
local field containing $\F_p(x)$, say $\F_p((x))$, and we set $t_1=x$
then the uncountability of $k$ means that the field
$\F_p(t_1,\ldots ,t_d)$ will embed in $k$ and moreover our finite
extension $K$ of $\F_p(t_1,\ldots ,t_d)$ will embed in some finite extension
of $k$ which will also be a local field.
 
Thus we can view the question of whether $S_g$ embeds in $SL(2,\F)$ for
$\F$ a global field (which implies the existence of  a proper action)
as the top question in a series of questions as to how ``economical''
we can take our field $K$ to be, since this is the $d=1$ case. As this
is unknown, we can instead ask how small we can take $d$ in a faithful
representation of $S_g$ and also how small the degree of our field extension
needs to be. In \cite{flss} it was shown that 
for every prime $p$ at least 5, there is a faithful embedding
of $S_2$ (and hence $S_g$) in $PGL(2,K)$ where $K$ is a finite extension of 
$\F_p(x,y)$, thus we can take $d=2$ if $p\neq 2,3$. In Section 5 we remove the
condition on $p$ and the need for a finite extension in that we provide in
Theorem \ref{mat} and Corollary \ref{mat2} a completely explicit
embedding of $S_2$ (and hence $S_g$ for any $g\geq 2$) in
$SL(2,\F_p(x,y))$ for any prime $p$. (In fact, as is presumably usual, the
four matrices provided in Theorem \ref{mat} are independent of $p$ and
work for every odd prime $p$, whereas $p=2$ requires a different
representation which is given in Corollary \ref{mat2}.) As this is the
most ``economical'' field possible for a 2 dimensional representation of
$S_g$ in positive characteristic short of actually establishing
a proper action on a finite product of locally finite trees,
we claim that this provides evidence in favour of
the existence of such an action. Moreover our faithful representation
of $S_g$ in $SL(2,\F_p(x,y))$ provides in Corollary \ref{moreacc}, for 
any finite collection of non-identity elements in $S_g$, a minimal faithful
action of $S_g$ on a locally finite tree in which all of those
elements act loxodromically,
thus providing further evidence in favour.

Our faithful representation of $S_2$ is obtained by using results in
\cite{con} on how to determine whether a pair of elements in the
automorphism group of a locally finite tree generate a discrete and
faithful copy of $F_2$. We then combine this with a result in \cite{sha}
on constructing faithful linear representations of an amalgamated free
product, given faithful linear representations of the factor groups.

Acknowledgements: The author would like to thank the anonymous referee
for many enlightening comments and suggestions on an earlier draft
of this article, including Theorem \ref{ref} and the examples involving
related groups.
 
\section{\bf Actions on locally finite trees}

\subsection{Actions on a single tree}

For us, all trees in this paper are simplicial trees, defined 
in the standard combinatorial way as in \cite{ser}, but then
regarded as metric spaces by equipping them with the resulting path
metric where each edge has length 1. Given any tree
$T$, we will use $Aut(T)$ to denote
the group of simplicial automorphisms of $T$ and these will be 
isometries of $T$. Moreover, saying that an abstract group $G$ acts on a tree 
$T$ will also mean that $G$ acts on $T$ by
simplicial automorphisms, but it needs not imply that the action is
faithful.
We will allow our actions to have edge inversions, since
if $G$ does invert an edge when
acting suitably on some tree $T$ then it would act suitably without edge 
inversions 
and still by automorphisms on the barycentric subdivision of $T$.
With this in mind, we say that an action of a group $G$ on a tree $T$
is free if no group element
except the identity has a fixed point in $T$ (equivalently no vertex
or midpoint of an edge in $T$ is fixed, or again equivalently no
vertex of the barycentric subdivision is fixed). In this case the action
of $G$ will of course be faithful, but in general we do not suppose this.

As $T$ will be a geodesic hyperbolic space and $G$ will act on $T$ by
isometries, we know that this action will fall into one of five distinct
types: it could have bounded orbits, be parabolic/horocyclic, be lineal,
be quasi-parabolic/focal, or be of general type in which case $G$ must
contain a copy of the non-abelian free group $F_2$. In the last three cases
$G$ must contain a loxodromic element whereas $G$ will not in the first
two cases. As we are acting on a tree,
there are further restrictions: first all elements must either act
loxodromically or elliptically. Also having bounded orbits will imply
the action has a global fixed point and (by a result of Serre in \cite{ser})
parabolic/horocyclic actions can only occur if $G$ is not finitely generated.

Given any subset $S$ of the vertices of a simplicial tree $T$, we can form
the convex closure $CCl(S)$ of $S$ in $T$ which is a subtree of $S$.
\begin{defn} \label{cre} Given
  a group $G$ acting on a tree $T$ containing a loxodromic element,
  the {\bf core} of this action $C_G(T)$ is the unique minimal subtree of $T$
  invariant under $G$, which exists and is equal to the union of all axes
  of loxodromic elements. 
\end{defn}
Note that (by minimality and convexity) $C_G(T)$ is equal to the convex
closure of the orbit of any vertex contained in $C_G(T)$. Moreover if $G$ is
finitely generated then the quotient $G\backslash C_G(T)$
is a finite graph (see for instance \cite{dunw} Proposition I.4.13).

We are interested in groups that have an interesting simplicial action on
a tree and particularly on some locally finite or bounded valence
tree, but let us first consider groups $G$ without such actions. Recall
that $G$ has Serre's property ($FA$) if every simplicial action of $G$
on a tree has a global fixed point. By \cite{ser} I.6 Theorem 15
if $G$ is finitely generated then this is equivalent to
saying that $G$ does not split as an HNN extension or amalgamated free
product. However if $G$ is countable but not finitely generated then $G$
never has ($FA$) because we can
use the coset construction as described in \cite{ser} I.6.1 or in  \cite{bh}
II.7.11. Here we provide more detail as it will be useful
elsewhere in this paper. The basic idea is that we can express $G$ as an
increasing union $\cup_{i=0}^\infty G_i$ of subgroups $G_i$ with $G_0=\{e\}$
and with $G_i$ properly contained in $G_{i+1}$ (and if we can do this
then $G$ must be infinitely generated).
We now form a tree $T$ with vertices partitioned into levels
$0,1,2,\ldots$. The 0th level vertices are the elements of $G$, the 1st level
vertices are the cosets of $G_1$ in $G$ and similarly for the $i$th
level vertices. Each edge of $T$
(which can be thought of as directed) runs from
some vertex on level $i$ to a vertex on level $i+1$, where for any $g\in G$
the coset $gG_i$ and the coset $gG_{i+1}$ are joined by an edge.
As $G$ acts by left multiplication on the cosets in $G$
of any subgroup $H$, this provides an action of $G$ on the vertices of $T$
which is by tree automorphisms and such that $T$ has one end fixed by $G$.
The quotient graph $G\backslash T$ is a ray with one vertex for each level,
with the stabiliser of a vertex $gG_i$ on level $i$ equal to
the subgroup $gG_ig^{-1}$, so note that if $g$ is in $G_j$ then this
stabiliser is contained in $G_{\max(i,j)}$.
The valence of the vertex
$gG_{i+1}$ is one plus the cardinality
of the number of cosets of $G_i$ in $G_{i+1}$  (and 1 for level zero). Thus
this is a locally finite tree precisely when every $G_i$ has finite index
in $G_{i+1}$ and is a bounded valence tree exactly when $[G_{i+1}:G_i]$ is
bounded above over all $i$.

We therefore introduce the following finitary properties as variants
of property ($FA$):

\begin{defn} \label{faff}
  The countable group $G$ has the fixed point property ({\bf $FA_{lf}$})
  if every simplicial action  of $G$ on a locally finite tree has a global
  fixed point. It has the fixed point property ({\bf $FA_{bv}$})
  if every simplicial action of $G$ on a bounded valence tree has a
  global fixed point.
\end{defn}

In \cite{ser} I.6.3 it is shown that the property $(FA)$ is preserved by
taking quotients, extensions of groups with $(FA)$, and by taking
supergroups of finite index (but not subgroups of finite index). Exactly
the same proofs apply to $(FA_{lf})$ and $(FA_{bv})$. However we now
give examples to distinguish these three properties.
\begin{ex} Here is a general way to create finitely presented groups
which do not have $(FA)$ but which do have $(FA_{lf})$. Start with a
finitely presented seed group $G$ which itself has $(FA)$. 
Suppose that $G$ has a finitely generated subgroup $H$ of
infinite index in $G$ with the following
property: if $H$ is contained in a finite index subgroup $L$ of $G$ then
$L=G$. (This is sometimes known in the literature as $H$ is not engulfed
in $G$ or $H$ is dense in the profinite topology of $G$.) For instance
we have in \cite{cap} by Caprace and R\'emy finitely presented simple groups
$S$ with property (T), thus with $(FA)$, so we can here take $G=S$ and
$H=\{id\}$.

We then form the amalgamation $A=F*_HG$ where $F$ is any finitely
presented group with $(FA)$ that contains a copy of $H$ as a proper subgroup.
Now $A$ is finitely presented and has an unbounded action
on the Bass - Serre tree given by this splitting. But suppose that $A$ acts
on a locally finite tree. Then
$F$ fixes a vertex $v$ of this tree (subdividing if necessary) and $G$
fixes some vertex $w$. On taking a path from $w$ to $v$, the finite
valence means that the subgroup $L$ of $G$ which fixes
the path between $w$ and $v$ has finite index in $G$.
Now $H$ is a subgroup of both $F$ and $G$ and
so fixes both $v$ and $w$, thus $H$ is contained in $L$. But our chosen
property of $H$ means that $L$ is equal to $G$. Thus $G$ fixes the
path between $v$ and $w$, so certainly fixes $v$. But so does $F$,
meaning that $A=\langle F,G\rangle$ does as well.
\end{ex}

\begin{ex} \label{nlfbv}
  Let us now show that $(FA_{lf})$ is not equivalent to $(FA_{bv})$ over
  all countable groups. Consider the restricted direct product
$G=C_2\times C_3\times C_5\times C_7\times \ldots$. We can regard $G$ as a
direct union of the increasing sequence of subgroups
$C_2,C_2,\times C_3,C_2\times C_3\times C_5,\ldots $, so that the coset
construction above gives us an action of $G$ on a locally
finite tree $T_0$ but this tree has unbounded valence. Note that this action
of $G$ does not have bounded orbits: indeed all stabilisers are finite.

But now suppose $T$ is any tree with
valence bounded by $N$ and consider an element $g$ acting on $T$ with
$g$ having finite order coprime to the numbers 
$2,3,4,\ldots ,N$. Then $g$ must fix a vertex $v_0$ as
$T$ is a tree but it must then also fix the vertices adjacent to $v_0$
by Orbit - Stabiliser and so on, thus $g$ acts as the identity. This means
that if $p^+$ is the smallest prime which is greater than $N$
and $p^-$ the largest prime at most $N$ then any element
in the subgroup $C_{p^+}\times\ldots$ acts trivially on $T$. Thus the
quotient of $G$ by this subgroup which is in the kernel of the action
leaves us only the finite group $H=C_2\times C_3\times\ldots \times C_{p^-}$
and so any action of $G$ on $T$ will have bounded orbits and hence a
global fixed point.
\end{ex}

Note: Let $\Gamma$ be a graph with valence bounded above by
$b$ say. Suppose $\Gamma$ has the
property that there is an integer $c$ such that for
any finite group $F$ acting by automorphisms on $\Gamma$, there is some vertex
$v_F$
of $\Gamma$ whose orbit has diameter at most $c$ (so $c=0$ for a tree).
The referee points out that the argument above applies to any action
of $G$ on $\Gamma$. This is because the orbit of $v_F$, or equivalently
the index of the stabilizer of $v_F$ in $F$, must have size
at most $1+b+\ldots +b^c$. Thus once we have primes $p$ greater than this
then putting $F=C_p$ means that $C_p$ fixes the vertex $v_F$ and so by
the above, any action of $G$ on $\Gamma$ will have bounded orbits.
Examples of such graphs would be the (1-skeleta of) finite dimensional CAT(0)
cube complexes (as some cube must be stabilized under $F$), Helly graphs
(a fixed vertex in the first Helly subdivision of $\Gamma$) and (bridged
graphs, namely the 1-skeleta of) systolic complexes. 

However $G$ in Example \ref{nlfbv}
is clearly not finitely generated. In fact we have:
\begin{prop} \label{equprp}
  The properties $(FA_{lf})$ and $(FA_{bv})$ are equivalent over the class
  of all finitely generated groups.
\end{prop}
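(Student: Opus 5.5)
One direction is immediate. A bounded valence tree is locally finite, so $(FA_{lf})$ implies $(FA_{bv})$ for any countable group. For the converse I will argue by contrapositive. Suppose $G$ is finitely generated, say by $s_1,\ldots,s_n$, and acts on a locally finite tree $T$ without a global fixed point. I will produce an action of $G$ on a bounded valence tree that also has no global fixed point.

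First I rule out degenerate actions. By the discussion in Section 2.1, a finitely generated group acting on a tree without a global fixed point cannot act parabolically/horocyclically, by Serre's result. Bounded orbits would give a fixed point. Hence $G$ contains a loxodromic element, and the core $C_G(T)$ of Definition \ref{cre} exists. It is a nonempty $G$-invariant subtree of $T$, and $G$ has no fixed point on it, since a fixed point there is also a fixed point in $T$. So we may replace $T$ by $C_G(T)$, which is still locally finite because it is a subtree of $T$.

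The key step is that $C_G(T)$ has bounded valence. By the remark after Definition \ref{cre} (from \cite{dunw} Proposition I.4.13), the quotient $G\backslash C_G(T)$ is a finite graph, since $G$ is finitely generated. So there are only finitely many $G$-orbits of vertices, say with representatives $v_1,\ldots,v_k$. The valence of a vertex is invariant under the action, so every vertex of $C_G(T)$ has valence equal to one of the valences of $v_1,\ldots,v_k$. Each of these is finite by local finiteness. Their maximum is therefore a uniform bound, and $C_G(T)$ is a bounded valence tree on which $G$ acts without a global fixed point. Thus $G$ fails $(FA_{bv})$, which completes the contrapositive.

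The main obstacle is making sure the finiteness of $G\backslash C_G(T)$ is available in the generality we need. That includes actions with edge inversions, which we handle by passing to the barycentric subdivision; this preserves both local finiteness and bounded valence. If I wanted to avoid citing \cite{dunw}, I would argue directly. Pick a vertex $v$ on the axis of some loxodromic element. Let $Y$ be the union of the geodesics $[v,s_iv]$, which is a finite subtree. The translates $GY$ then form a connected $G$-invariant subtree, since consecutive translates $gY$ and $gs_iY$ meet at $gs_iv$. By minimality this subtree contains $C_G(T)$. So every vertex of the core lies in a translate of the finite set of vertices of $Y$, which again gives finitely many orbits and hence bounded valence.
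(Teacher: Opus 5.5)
Your proof is correct and follows the paper's argument: rule out bounded and parabolic actions using finite generation, pass to the core $C_G(T)$, and note that finitely many vertex orbits, each with constant finite valence, give a uniform valence bound. Your optional direct argument for finitely many orbits, via translates of the union of the segments $[v,s_iv]$, is a valid substitute for citing \cite{dunw}. One small correction there: the fact that $GY$ contains the core comes from the core being the union of all loxodromic axes, which every $G$-invariant subtree contains, rather than from minimality alone.
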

\begin{proof}
Suppose that our finitely generated group $G$ acts on the locally finite
tree $T$ without a global fixed point. Thus this is not an action with
bounded orbits and the fact that $G$ is finitely generated
means that it is not a parabolic action either. Therefore there is a
loxodromic element of this action and hence we can take the core
$C_G(T)$
which we know
is an infinite subtree of $T$ such that $G\backslash C_G(T)$ is a finite
graph. Now a finite graph clearly has bounded
valence and the vertices $v$ in $C_G(T)$ fall into
finitely many orbits under $G$, with the degree constant over each orbit
(though this could be much bigger than the degree of the 
corresponding vertex in the quotient graph), thus we have a
finite upper bound for the degree of vertices in $C_G(T)$.
\end{proof}

\subsection{Proper actions on products of trees}

We mentioned in the introduction that if a finitely generated group
$G$ acts on a tree such that all stabilisers are finite
then $G$ is a virtually free group.
However the purpose of this paper is to
consider actions of groups on products of trees, not just on a single tree.
Suppose that $P$ is a finite product $T_1\times\ldots \times T_n$ of
trees where the trees need not be isomorphic. We put a
cube complex structure on $P$ in the obvious way and we equip $P$ with
the $\ell_2$ product metric, using the path metric on each factor, so
that $P$ is a finite dimensional CAT(0) cube complex. If further all
trees are locally finite then $P$ is a locally finite cube complex and
also a proper metric space. We then consider groups $G$ acting
on $P$, so that there is a homomorphism from $G$ to $Aut(P)$ where
the cube complex structure is preserved (and so $G$ will be acting by
isometries of $P$).
This means that $G$ could act
by permuting the factor trees (although it can obviously only permute
isomorphic trees), though there will always be a finite index subgroup
$H$ of $G$, written $H\leq_f G$, which preserves all factors. Then
we can consider $H$, or at least the image of $H$ in
$Aut(P)$, as a subgroup of
$Aut(T_1)\times\ldots \times Aut(T_n)$, in which case we say that
$H$ acts {\bf preserving factors}. In this situation, we obtain for each
$1\leq i\leq n$ an action of $H$ on the tree $T_i$, given by the natural
projection $\pi_i:Aut(T_1)\times\ldots \times Aut(T_n)\rightarrow Aut(T_i)$.

\begin{defn} \label{prpd}
We say a group $G$ acts {\bf properly} on a finite product $P$
of trees if the stabiliser of every vertex in $P$ is a finite group.
\end{defn}
This property of groups is clearly preserved under subgroups, supergroups
of finite index (by inducing an action which will also be proper) and finite
direct products of groups. However it is not preserved by extensions in
general, for instance $\Z$ and $\Z^2$ clearly act properly on a bounded
valence tree and a product of two bounded valence trees respectively,
but the Heisenberg group $H(\Z)$ with $H(\Z)/\Z\cong\Z^2$ cannot act
properly on any finite product of (even locally infinite) trees as the
infinite order central element must fix a point when acting on any tree,
and this is true of any finite index subgroup as well.

Note that there are other notions of a proper action in the literature, such
as being metrically proper or
having a finite kernel and with the image being discrete under the compact
open topology/topology of pointwise convergence for $Aut(P)$. These notions
need not be the same as our Definition \ref{prpd} even for one tree.
However we note that, for instance, the definition here coincides with
that in \cite{gntr} which investigates proper actions on the more general
class of finite dimensional CAT(0) cube complexes. Moreover our main focus
in this paper is when our trees are all locally finite, in which case
all common definitions of a proper action coincide.
 
If our group $G$ acts preserving factors so that any $g\in G$ can be written
$(g_1,\ldots ,g_n)$ for some $g_i\in Aut(T_i)$ then 
$g$ is in the stabiliser $G_{\boldsymbol v}$ for a vertex
${\boldsymbol v}=(v_1,\ldots ,v_n)$
of $P$ if and only if $g_1$ stabilises 
$v_1\in T_1,\ldots ,$ and $g_n$ stabilises 
$v_n\in T_n$. In other words, given a vertex 
$v_i\in T_i$, suppose we write $G_{v_i}$ for the stabiliser of $v_i$ in $G$
when we regard $G$ as acting on the tree $T_i$ by projection. Then $G$ acts 
properly if and only if for all vertices $(v_1,\ldots ,v_n)\in P$, we have
the intersection of stabilisers $G_{v_1}\cap \ldots \cap G_{v_n}$ is finite.
In fact if all trees are locally finite then it is enough just to check
one vertex of the product space, because all such vertex stabilisers
will be commensurable with each other. However if some tree is not locally
finite then every vertex in the product needs to be checked for a
finite stabilizer.

We can now ask: when does a group act properly on a finite
product of trees? What about if we require each tree to be
locally finite or of bounded valence? The first point is that a group $G$
having property ($FA$) (or ($FA_{lf}$) or ($FA_{bv}$)) would act on any
(locally finite/bounded valence) tree with a global fixed point, so
if $G$ acts on any finite product $P$ of (locally finite/bounded valence)
trees preserving factors 
then it also has a global fixed point in $P$ and hence fails to
act properly if it is infinite. This might not be the case
if a finite index subgroup $H$ of $G$ fails to have this property, as $H$
could have a suitable action preserving factors whereupon we can induce
the action up to $G$. However if $G$ is infinite and
all finite index subgroups of $G$ have
($FA$) (or ($FA_{lf}$) or ($FA_{bv}$)) then $G$ cannot act properly on any
finite product of (locally finite/bounded valence) trees. Moreover our group
$G$ cannot contain any such subgroup $S$ either, as otherwise the proper
action would pass from $G$ down to $S$. 

We now show, as was seen in Proposition \ref{equprp},
that if we are dealing with finitely generated
groups then we do not need to worry about the distinction
between locally finite and bounded valence trees.

\begin{lem} \label{equiv}
If $G$ is a finitely generated group then $G$ acts
properly on a finite product of locally finite trees if and only
if $G$ acts properly on a finite product of bounded valence trees.
\end{lem}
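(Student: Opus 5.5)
One direction is trivial, since a bounded valence tree is locally finite. For the converse, suppose the finitely generated group $G$ acts properly on $P=T_1\times\ldots\times T_n$ with each $T_i$ locally finite. I would first pass to the finite index subgroup $H\leq_f G$ which preserves factors. $H$ is still finitely generated, and it acts properly. If I can find a proper action of $H$ on a product of bounded valence trees, then inducing up to $G$ gives $G$ a proper action on the $[G:H]$-fold power of that product. That product is again a finite product of bounded valence trees, so properness passes up to $G$ as noted after Definition \ref{prpd}. So it suffices to treat $H$ acting preserving factors, through projections to $Aut(T_i)$.

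For each $i$, consider the action of $H$ on $T_i$.

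\textbf{Case 1: the action has a loxodromic element.} Replace $T_i$ by the core $C_H(T_i)$. As in the proof of Proposition \ref{equprp}, $H\backslash C_H(T_i)$ is a finite graph, so $C_H(T_i)$ has finitely many vertex orbits. Since $C_H(T_i)$ is a subtree of a locally finite tree, the degree of each vertex is finite, and the degree is constant on orbits. Hence $C_H(T_i)$ has bounded valence.

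\textbf{Case 2: no loxodromic element.} Since $H$ is finitely generated, a parabolic action is excluded, so $H$ fixes a vertex $v_i\in T_i$. In this case I would simply drop the factor $T_i$.

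The new space $P'$ is the product of the cores $C_H(T_i)$ over the indices $i$ in Case 1. If there are no such $i$, then $H$ fixes a vertex of $P$, so $H$ is finite. A finite group acts properly on a point, which we may view as a one-vertex tree, so this degenerate case is fine. Otherwise $P'$ is a finite product of bounded valence trees on which $H$ acts preserving factors.

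It remains to check properness. Pick a vertex $(w_i)$ of $P'$. Its stabiliser is the intersection of the stabilisers $H_{w_i}$ over the Case 1 indices. Intersecting with $H_{v_i}=H$ for the dropped indices changes nothing, so this equals the stabiliser in $H$ of the vertex of $P$ with coordinates $w_i$ in Case 1 and $v_i$ in Case 2. That stabiliser is finite by properness on $P$. Since the vertex of $P'$ was arbitrary, every vertex stabiliser is finite, as required.

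The main point needing care is the edge-inversion and subdivision convention. A fixed "vertex" in Case 2 might a priori be only a fixed edge midpoint. I would handle this by passing to barycentric subdivisions at the outset. Subdivision preserves local finiteness and bounded valence, and proper actions remain proper, because vertex stabilisers in the subdivision are either vertex stabilisers or edge stabilisers, and edge stabilisers are contained in index-at-most-2 extensions of vertex stabilisers.
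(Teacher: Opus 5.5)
Your proof is correct and follows the same route as the paper: pass to a finite-index subgroup preserving factors, replace each tree by its core (bounded valence because the quotient graph is finite, as in Proposition \ref{equprp}), note that vertex stabilisers in the product are unchanged, and induce back up to $G$. Your separate treatment of factors with a global fixed point (where no core exists, so the factor is dropped) and of edge inversions makes explicit details that the paper's short proof leaves implicit.
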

\begin{proof}
If the trees $T_1,\ldots ,T_k$ are locally finite,
we can suppose that $G$ preserves factors by dropping down
to a finite index subgroup and then taking an induced action at the
end. Let us therefore consider the action of $G$ on each 
$T_i$. As $G$ is finitely
generated, the core $C_G(T_i)$ has bounded valence by Proposition
\ref{equprp}, so we replace each $T_i$ in the product with its core
$C_G(T_i)$, which is invariant under $G$ and with the action on
$C(T_1)\times\ldots\times C(T_k)$ still proper since stabilisers
of vertices in the product are still finite.
\end{proof}

As for a group $G$ which shows that this is not true without the
finite generation condition on $G$, we can adapt Example \ref{nlfbv}.
\begin{ex} \label{nlfbvag}
  Our group $G=C_2\times C_3\times C_5\times \ldots$ was shown to act
  on a locally finite tree with finite stabilisers, but on any bounded
  valence tree it must have a global fixed point. This argument also
  applies to any group of the form $C_{p_1}\times C_{p_2}\times\ldots$
  for $p_1,p_2,\ldots$ a strictly increasing sequence of primes. 
  If we now take $H$ to be any index $N$ subgroup of $G$ then for any
  factor $C_p$ of $G$ with $p$ prime and $p>N$, we have $H\cap C_p$ has
  index at most $N$ in $C_p$ so $C_p<H$. Thus $H$ contains the finite index
  subgroup $C_{p^+}\times \ldots$ for $p^+$ the smallest
  prime larger than $N$, which has ($FA_{bv}$) and hence so does $H$.
  Thus $G$ cannot act properly on a finite product of bounded valence trees,
  even without preserving factors.
\end{ex}

One useful consequence of the above is that if we have a finitely generated
group $\Gamma$ which contains $G$ in Example \ref{nlfbvag} then $\Gamma$
also cannot act properly on a finite product of bounded valence trees, so
by Lemma \ref{equiv} $\Gamma$ cannot act properly on a finite product of
locally finite trees.

\subsection{Groups possessing proper actions}

Let us now consider which particular groups are known to act properly on a
finite product of (bounded valence/locally finite) trees.

Starting with the case of one tree, it is a well known fact
that a group $G$ acts freely (namely with trivial stabilisers)
on some tree $T$ if and only if it is a free group. (For instance see
\cite{ser} Proposition 15 and Theorem 4 for a combinatorial proof,
whereas a topological proof follows immediately from the statements
in \cite{htch} Proposition 1.40, Proposition 1A.2 and Lemma 1A.3.)
However if we now change
acting freely to acting properly then we can have infinitely generated
groups acting with finite stabilisers on locally finite or even bounded
valence trees but which are not virtually free. But if we restrict to
finitely generated groups $G$ then
(see for instance \cite{ser}, \cite{dunw}) $G$ acts properly on some tree
if and only if $G$ is virtually free, in which case we can take this
tree to be of bounded valence.

If we now consider the case of $k\geq 2$ trees, we do have other examples
(whereupon we can take subgroups, finite index supergroups and direct
products to create further examples). The most obvious groups are
direct products of $k$ free groups where the $i$th factor group acts 
freely on the $i$th factor tree and trivially on all other factor 
trees. Other examples for $k=2$ include the Burger - Mozes
groups in \cite{bm} and the related groups of Wise in \cite{wshlv} using
complete square complexes (see \cite{cpbmw} for a useful survey about
these groups, especially Section 4). These examples in fact act properly and
cocompactly. We also have lamplighter groups of the form $C_n\wr \Z$
for any integer $n\geq 2$. These groups are finitely generated but are well
known not to be finitely presented. Thus they
cannot act properly and cocompactly on any product of trees,
for instance by \cite{bh} I.8.11 which states that a group is finitely
presented if and only if it acts properly and compactly on a simply
connected geodesic space.

To see that these lamplighter groups $C_n\wr \Z$ do act properly on a finite
product of locally finite trees, one approach (when $n=p$ is prime)
is to note that they embed in $PGL(2,\F_p(t))$ for $t$ a transcendental
element over the finite field $\F_p$ via the matrices
\[\sma{ll} 1&1\\0&1\fma \mbox{ and }\sma{ll} t& 0\\0 &1\fma.\]
One can then take a couple of valuations on $\F_p(t)$ and
use the action of $PGL(2,\F_p(t))$ with respect to each valuation
on its Bruhat - Tits building (here a tree).

As an alternative, suggested by the referee and which can be adapted to
other examples where $C_n$ need not be prime, or even can be replaced
by any finite non-trivial group,
we vary the coset construction at the start of Section 2 by
allowing the case where $G$ is an ascending union $\cup_{i=-\infty}^\infty G_i$
of subgroups, with $i$ now varying over $\Z$ rather than $\N$. This action
has exactly the same properties as the previous coset construction, except
that now the level 0 vertices are not distinguished and the quotient
graph $G\backslash T$ is a bi-infinite line. Also $T$ has infinitely many
ends but there is still a distinguished end that is fixed by every
element of $G$, which corresponds to taking a ray where the vertex
stabilisers increase as one moves further out along the ray. Note that if
each subgroup $G_i$ is properly contained in $G_{i+1}$ then every $G_i$ must
be infinite and so stabilisers of vertices will be infinite. This action is
still a parabolic/horocyclic action with every element acting elliptically.

However now suppose we have a group $\Gamma=\langle H,t\rangle$ which is
a strictly ascending HNN extension of some group $H$. That is, $H$
(which may or may not be finitely generated) contains a proper subgroup
$L$ which is isomorphic to $H$ via some isomorphism $\theta:H\rightarrow L$
and  $t$ is the stable letter conjugating
$H$ to $tHt^{-1}=L$, where $tht^{-1}=\theta(h)$ for $h\in H$. This can
also be described as a semi-direct product
$\Gamma=G\rtimes\langle t\rangle$ where
for $i\in \Z$ we set $G_i=t^{-i}Ht^i$ and
$G=\cup_{i=-\infty}^\infty G_i$ is a strictly ascending union with
$tG_it^{-1}=G_{i-1}$, because $L$ is a proper subgroup of $H$.

Now consider the Bass - Serre tree $T$ of this HNN extension. On restricting
the corresponding action of $\Gamma$ to $G$, we obtain the variation
of the coset construction over $\Z$ rather than $\N$ which we have just
described. As for $t$, it sends the vertex $gG_i$ to the vertex
$tgG_it^{-1}=tgt^{-1}G_{i-1}$, thus acts as a loxodromic element with axis
consisting of the vertices defined by the subgroups $G_i$. Note that
although the action of $G$ on $T$ is of
parabolic/horocyclic type, the action of $\Gamma$ is of
quasi-parabolic/focal type and the distinguished fixed end is given by
the repelling fixed point of this stable letter $t$.

We can also approach this construction the other way round: suppose we
have a group $\Gamma=\langle H,t\rangle$ where $H$ is a subgroup of $\Gamma$
and $t$ is an element of $\Gamma$ such that $tHt^{-1}$ is a proper
subgroup of $H$. Then $\Gamma$ is a strictly ascending HNN extension of
$H$. This can be seen because the natural homomorphism from the abstract
HNN extension onto $\Gamma$ is injective, since every element in the
HNN extension can be written as $t^iht^j$ for $h\in H$ and $i,j\in \Z$
but any such element is non-trivial in $\Gamma$ unless $h=id$ and $i+j=0$.

Now in order to make $C_2\wr\Z$ (or $F\wr\Z$ for $F$ any finite group, on
merely replacing $C_2$ with $F$) act in this way, we write this group
as
\[\left(\bigoplus_{j=-\infty}^\infty F_j\right)\rtimes\langle t\rangle\]
where $F_j=\{id, x_j\}$ is a copy of $C_2$ and $tx_jt^{-1}=x_{j+1}$.
We then set $G_i$ to be $\oplus_{j=-\infty}^i F_j$, thus expressing
$G=\oplus_{j=-\infty}^\infty F_j$ as an ascending union of the subgroups $G_i$.
As we have $tG_it^{-1}=G_{i+1}$ with $G_i$ properly contained in $G_{i+1}$,
we see that $\Gamma$ is a strictly ascending HNN extension
(though here the stable letter is $t^{-1}$ if we want it to be
consistent with the description above).
As $G_i$ clearly has index 2 (or index $|F|$) in $G_{i+1}$, the resulting
Bass - Serre tree of this HNN extension has bounded valence. However let us
consider the stabiliser in $\Gamma$
of the vertex corresponding to $G_i$. Using the
semi-direct product decomposition, any element in $C_2\wr\Z$ can be
expressed in the form $gt^k$ for $g\in G$ and $k\in\Z$. But $gt^k$ sends
the $i$th level vertices to the $k+i$th level, so if $k\neq 0$ then
$gt^k$ does not fix a point (and so is loxodromic). Thus the stabiliser
in $\Gamma$ of this vertex is actually the stabiliser in $G$, which
is equal to $G_i$. In particular
this action is not proper (indeed there cannot be a proper action
of $G$ on any tree since $G$ is finitely generated but clearly not
virtually free).

However we can make $C_2\wr\Z$ (or $F\wr\Z$) act properly on the product
of two locally finite trees by now running this process ``backwards''.
Namely we create a second action of $C_2\wr\Z$ on this locally finite tree
by using the same form of coset construction, but this time defining
$G_i$ to be the subgroup $\oplus_{j=i}^\infty F_j$ for $F_j$ the same copy
of $C_2$ as before. On taking $v$ to be the vertex of the first tree
corresponding to (and thus having stabiliser equal to)
$G_0=\oplus_{j=-\infty}^0 F_j$, and $w$
to be the vertex of the second tree corresponding to the subgroup
$\oplus_{j=0}^\infty F_j$, we have that when $\Gamma$ acts on the product
of these two trees (preserving factors), the stabiliser in $\Gamma$ of the
vertex $(v,w)$ in this product is equal to the stabiliser in $G$
of $(v,w)$, which is
\[\left(\bigoplus_{j=-\infty}^0 F_j\right)\cap
    \left(\bigoplus_{k=0}^\infty F_k\right)=F_0\]
  which is finite, thus $C_2\wr\Z$ acts properly on the product of
  two locally finite trees.

  We can use this construction to give an example of a finitely generated
  group that acts properly on a finite product of trees, but not on
  any finite product of locally finite trees. The author is grateful to
  the anonymous referee for pointing out this result.
  \begin{thm} \label{ref}
    Consider the second Houghton group $H_2=FSym(\Z)\rtimes\langle t\rangle$
    where $FSym(\Z)$ is the (infinitely generated) group of permutations    
    of the integers $\{\ldots ,-1,0,1,\ldots\}$ with finite support and
    $t$ is the shift $t(k)=k-1$. Then $H_2$ is finitely generated and acts
    properly preserving factors
    on the product of two trees, but does not act properly (whether
    preserving factors or not) on the product of any finite number of
    locally finite trees.
  \end{thm}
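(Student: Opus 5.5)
We will prove the three assertions in turn: finite generation, a proper action on two trees, and the non-existence of a proper action on locally finite trees.

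\emph{Finite generation.} Conjugating the transposition $(0\,1)$ by powers of $t$ gives every adjacent transposition $(k\,k+1)$. These generate $FSym(\Z)$. Hence $H_2=\langle (0\,1),t\rangle$.

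\emph{The proper action.} We imitate the lamplighter construction just given. Set $G=FSym(\Z)$ and, for $i\in\Z$, let $G_i$ be the permutations supported in $(-\infty,i]$. Then $G=\bigcup_i G_i$ is a strictly ascending union, and conjugation by $t$ shifts $G_i$ to $G_{i\pm1}$. So $H_2$ is a strictly ascending HNN extension, by the criterion above (the $tHt^{-1}\subsetneq H$ version). Its Bass--Serre tree $T_1$ carries the $\Z$-indexed coset construction. The index $[G_{i+1}:G_i]$ is infinite, so $T_1$ is not locally finite; this is expected.

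Similarly, using $G'_i$, the permutations supported in $[i,\infty)$, with the shift running the other way, we get a tree $T_2$. As in the lamplighter computation, an element $gt^k$ with $k\neq0$ shifts levels in each tree, so it fixes no vertex. Consequently the stabiliser of $(v,w)$, where $v$ is the vertex for $G_0$ and $w$ the vertex for $G'_0$, is $G_0\cap G'_0=Sym(\{0\})$, which is trivial.

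We must check every vertex, since the trees are not locally finite. A vertex stabiliser in $T_1$ is $gG_ig^{-1}\subseteq G_{\max(i,j)}$, which consists of permutations with support bounded above. Likewise stabilisers in $T_2$ have support bounded below. Their intersection consists of finitary permutations supported in a fixed finite interval, so it is finite. The action is therefore proper and preserves factors.

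\emph{No proper action on locally finite trees.} By the remark after Example \ref{nlfbvag}, it suffices to embed a group $C_{p_1}\times C_{p_2}\times\cdots$, with $p_1<p_2<\cdots$ primes, into $H_2$; then Lemma \ref{equiv} finishes the argument. Such an embedding is immediate: choose disjoint blocks of integers of sizes $p_1,p_2,\ldots$, and let a $p_j$-cycle act on the $j$th block. These cycles commute and have finite support, so they generate a restricted direct product inside $FSym(\Z)\le H_2$.

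Example \ref{nlfbvag} shows that this subgroup has no proper action on any finite product of bounded valence trees, even allowing the factors to be permuted. Now suppose $H_2$ acted properly on a finite product of locally finite trees. By Lemma \ref{equiv}, since $H_2$ is finitely generated, it would act properly on a product of bounded valence trees. Restricting that action to the subgroup gives a contradiction.

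The main obstacle I expect is verifying properness at all vertices in the non-locally-finite case, that is, justifying the containment of conjugate stabilisers $gG_ig^{-1}\subseteq G_{\max(i,j)}$ for $g$ in $G$ together with the loxodromic behaviour of $gt^k$. This follows the earlier coset-construction remarks, so I do not expect genuine difficulty there.
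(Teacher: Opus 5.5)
Your proposal is correct and follows the paper's proof essentially step for step. It uses the same two strictly ascending HNN-extension trees built from $FSym((-\infty,i])$ and $FSym([i,\infty))$, the same reduction of properness to finite intersections $G_m\cap G'_n$ via $gG_ig^{-1}\subseteq G_{\max(i,j)}$ (exactly how the paper handles the need to check every vertex), and the same obstruction from an embedded copy of $C_{p_1}\times C_{p_2}\times\cdots$ via Example \ref{nlfbvag} and Lemma \ref{equiv}.
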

  \begin{proof}
    The group $H_2$ is well known to be finitely generated, for instance
    by the transposition $(0\,\,1)$ and $t$. For $i\in\Z$, let us define
    the subset $S_i$ of $\Z$ to be all integers at most $i$. This gives 
rise to the ascending
sequence $G_i=FSym(S_i)$ of subgroups of $G=FSym(\Z)$, where $FSym(S_i)$
is the group of permutations having support that is finite and contained in
the set $S_i$. Note that $tG_it^{-1}$ is equal to $G_{i-1}$ and $G_{i-1}<G_i$
so this is a strictly ascending HNN extension as above. We then run the
process backwards by considering $FSym(S'_i)$ where $S'_i$ consists of all
the integers at least $i$, and then take the resulting action of $H_2$ on
the product of these two trees.
Note though that these are not locally finite trees because $FSym(S_{i-1})$
has infinite index in $FSym(S_i)$ by Orbit - Stabiliser, since
the orbit under $FSym(S_i)$ of $i$ is infinite.

We now look for the stabiliser of a vertex in the product structure,
but as the trees are not locally finite we will need to check the
stabiliser of more than one vertex in each tree. However as our actions
come from the coset construction, it is enough to check the stabiliser
of each vertex corresponding to a subgroup $G_i$ for all large $i$,
because every vertex stabiliser lies in one of these.

In the first tree the stabiliser of the vertex labelled by $FSym(S_i)$ is
just the subgroup $FSym(S_i)$ and similarly we obtain the stabiliser
$FSym(S'_j)$ for the appropriate vertex in the second tree. But any group
element lying in both these stabilisers fixes all points greater than
$i$ and all points less than $j$, so
the intersection of these two stabilisers is finite.

Now note that by taking disjoint cycles in $\Z$ of
length $p$ for each prime $p$, we have a copy in $H_2$
of the group $G$ from Example \ref{nlfbvag} where it was shown that
$G$ and all its finite index subgroups have property ($FA_{bv}$).
Thus $H_2$ cannot act properly on any finite product of bounded valence,
hence locally finite by Lemma \ref{equiv}, trees.
\end{proof}

This also suggests (as indicated by the referee) that we can find
examples with better finiteness properties by considering the higher
Houghton groups $H_n$ for $n\geq 3$. It is indeed the case that they
also satisfy the conditions of Theorem \ref{ref} (with two replaced
by $n$). Here we give an outline of that argument for $H_3$, which
is well known to be finitely presented.

\begin{co} \label{h3fp}
    The finitely presented third Houghton group $H_3$ acts
    properly preserving factors
    on the product of three trees, but does not act properly (whether
    preserving factors or not) on the product of any finite number of
    locally finite trees.
\end{co}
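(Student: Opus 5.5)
We model $H_3$ concretely and adapt the proof of Theorem \ref{ref}. Take $X$ to be the disjoint union of three rays $R_1,R_2,R_3$, each a copy of $\N$. Then $H_3$ is the group of permutations of $X$ that are eventually translations on each ray, with translation lengths $m_1,m_2,m_3$ summing to zero. It contains $FSym(X)$ as a normal subgroup with $H_3/FSym(X)\cong\Z^2$. We take finite presentability as the standard known fact.

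For each $k\in\{1,2,3\}$ we build a tree $T_k$ by a coset construction. For $n\in\N$ let $Y_{k,n}$ be the union of $R_k$ with the first $n$ points of each of the other two rays; this is an increasing exhaustion of $X$. Let $G_{k,n}$ be the subgroup of $H_3$ consisting of elements that are eventually translation on $R_k$ by $0$, i.e.\ that fix $R_k$ pointwise far out.

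A cleaner version of this, which we follow, uses vertex sets directly. The vertices of $T_k$ at "level" $n$ are the sets of the form $h(R_k\setminus\{\text{first } n \text{ points}\})$ for $h\in H_3$. These are the "tails of $R_k$" up to a finite symmetric difference, graded by an integer level. Two tails are joined by an edge when one is obtained from the other by deleting a single point. This is the analogue of the Bass--Serre tree of the ascending HNN extension seen in Theorem \ref{ref}, using only the ray $R_k$.

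We need two properties of $T_k$. First, $T_k$ is a tree: each tail $A$ has a unique parent in the direction of the fixed end, namely $A$ with one point added, and the poset structure makes the graph a tree. Second, $H_3$ acts on $T_k$ by simplicial automorphisms, since any $h\in H_3$ maps a cofinite subset of $R_k$ into $R_k$ and so maps tails to tails.

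The stabiliser of the vertex $A_k=R_k$ is the set of $h$ with $h(R_k)=R_k$. Intersecting over $k=1,2,3$ gives the permutations of $X$ preserving each ray setwise and eventually translating on it. A permutation of $\N$ that is eventually a translation must have translation length $0$. So $h$ is the identity outside a finite set $F_h$, but $F_h$ is not uniformly bounded, so this intersection is infinite.

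To fix this, we also impose the condition at level $n$: take vertex stabilisers of the tails $R_k^{\ge n}$, meaning $R_k$ with its first $n$ points removed. The intersection over $k$ of the stabilisers of all the $R_k^{\ge n}$ is not yet enough, so we should instead choose the vertex $\boldsymbol v=(v_1,v_2,v_3)$ with $v_k=X\setminus R_k^{\ge n}$'s complement. That is, we use the dual grading in the spirit of "running backwards": $v_k$ is the complement tail $X\setminus R_k$, which is a cofinite subset of the union of the other two rays.

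The key point is this. An element stabilising all three $v_k$ preserves each $X\setminus R_k$, hence each $R_k$, and is eventually the identity on each $R_k$. This still allows $FSym$ of any finite set. So the definition must ensure that elements stabilising the vertex fix all points beyond level $n$ pointwise, exactly as with $FSym(S_i)$ in Theorem \ref{ref}. We therefore take the vertices of $T_k$ to be the cosets $hG_{k,n}$, where $G_{k,n}$ is the set of elements that fix pointwise the points of $R_k$ at positions $\ge n$. Then $gG_{k,n+1}\supseteq$-adjacency forms the tree of the coset construction as in Section 2. With this definition, $\bigcap_k G_{k,n}$ fixes every point outside the first $n$ points of each ray, and is therefore finite.

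\textbf{Main obstacle.} Each $G_{k,n}$ is not an HNN-style ascending chain over $\Z$ inside a single group with a loxodromic element. Verifying that $H_3$ genuinely acts on the resulting graph and that the graph is a tree is the main step. Our plan is to realise $T_k$ as the Bass--Serre tree of a splitting of $H_3$ as an ascending HNN extension. The base is the kernel of the homomorphism $H_3\to\Z$ that records the translation on $R_k$, and the stable letter is an element translating $R_k$ by one.

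The negative part follows as in Theorem \ref{ref}. Disjoint $p$-cycles in $X$ give a copy of the group $G$ of Example \ref{nlfbvag} inside $H_3$. So $H_3$ cannot act properly on a finite product of bounded valence trees, and by Lemma \ref{equiv} it cannot act properly on a finite product of locally finite trees either.
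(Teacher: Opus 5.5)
Your final construction is the paper's: your $G_{k,n}$ (the pointwise stabiliser of $R_k^{\ge n}$) are its $K_j,L_j,M_j$, your finiteness of $\bigcap_k G_{k,n}$ is its support count, and your negative half (disjoint $p$-cycles, Example \ref{nlfbvag}, Lemma \ref{equiv}) is the same. As written, though, there is a genuine gap exactly at the step you call the main obstacle, which is producing an action of all of $H_3$ on a tree. There are also false claims along the way.

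\begin{itemize}
\item The ``tails'' graph is not a tree. For distinct $x,y\in R_k$, the vertices $R_k$, $R_k\setminus\{x\}$, $R_k\setminus\{x,y\}$, $R_k\setminus\{y\}$ form a $4$-cycle. A tail also has one parent for each point of its complement, not a unique one.
\item The graph on all cosets $hG_{k,n}$ with $h\in H_3$ and $n\ge 0$ is a forest, with one component for each coset of the kernel $K_k$ of $t_k:H_3\to\Z$. It is not a tree.
\item Your proposed repair is misstated. An HNN extension with base $K_k$ and a stable letter normalising it is just $K_k\rtimes\Z$. Its Bass--Serre tree is a line on which $K_k$ acts trivially. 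With those three trees, every product vertex is fixed by $K_1\cap K_2\cap K_3=FSym(X)$, which is infinite.
\end{itemize}

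What is needed, and what the paper does, is to take the base to be $G_{k,0}$, not $K_k$.
\begin{itemize}
\item Choose $h_k$ translating all of $R_k$ up by exactly one (pushing down $R_{k+1}$ and fixing $R_{k+2}$). Then $h_k(R_k^{\ge n})=R_k^{\ge n+1}$, hence $h_kG_{k,n}h_k^{-1}=G_{k,n+1}$.
\item The inclusion $G_{k,n}\subsetneq G_{k,n+1}$ is strict: swap the point at position $n$ of $R_k$ with a point of another ray.
\item Extending to $n<0$ by conjugation, $K_k=\bigcup_n G_{k,n}$ and $H_3=K_k\rtimes\langle h_k\rangle$.
\end{itemize}
So the $G_{k,n}$ form exactly the kind of ascending chain you say they do not. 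By Section 2.3, $H_3$ is then a strictly ascending HNN extension of $G_{k,0}$, and its Bass--Serre tree is $T_k$.

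Finally, these trees are not locally finite, so checking the single product vertex $(G_{1,n},G_{2,n},G_{3,n})$ is not enough. You need one more observation. Every vertex stabiliser in $T_k$ is a conjugate $gG_{k,0}g^{-1}$, which is the pointwise stabiliser of $g(R_k)$. Since $g(R_k)$ contains some tail $R_k^{\ge m}$, this stabiliser lies in $G_{k,m}$. Hence every product vertex stabiliser lies in some $G_{1,m_1}\cap G_{2,m_2}\cap G_{3,m_3}$. Its elements are supported on at most $m_1+m_2+m_3$ points, so it is finite.
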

\begin{proof}
    The argument for locally finite trees is exactly the same as in Theorem
    \ref{ref} because $H_3$ contains the same poison subgroup preventing
    an action on bounded valence trees and $H_3$ is of course finitely
    generated.

    For the action on three trees, the facts we will use about $H_3$
    (see \cite{cxy} and references therein) are:\\
\hfill\\
(1) $H_3$ is the subgroup of bijections of the (``three pronged'') set
    $X=\{(i,m)\,|\,i=1,2\mbox{ or }3\mbox{ and }m\in\Z\}$ consisting of those
    bijections which are ``eventually a translation'' on each of the three
    prongs, where the first prong $P_1$ is equal to the
    subset $\{(1,m)\,|\,m\in\Z\}$ of $X$, and the same for $P_2,P_3$.\\
\hfill\\
(2) There is a homomorphism $\tau:H_3\rightarrow \Z^3$ given
    by $\tau(g)=(t_1(g),t_2(g),t_3(g))$ where $t_i(g)$ is the
    ``eventual translation length'' on prong $P_i$
    (but it is not onto as we have $t_1(g)+t_2(g)+t_3(g)=0$).\\
\hfill\\    
(3) We have three distinguished infinite order elements $h_1,h_2,h_3$
    where (taking $i$ modulo 3)
    $h_i$ pushes points up the $i$th prong, namely $h_i(i,m)=(i,m+1)$,
    down the $i+1$th prong and fixes $P_{i+2}$ pointwise. Thus
    $\tau(h_1)=(1,-1,0)$ and so on.\\
\hfill\\
    To obtain the action of $H_3$ on the first tree $T_1$, let $K$
    be the kernel of the homomorphism from $H_3$ to $\Z$ given by
    projecting $\tau$ onto the first $\Z$ factor. Then we have
    $H_3=K\rtimes \langle h_1\rangle$ and $K$ consists of elements
    that are eventually the identity on $P_1$.

    Thus for $j\in\N$ define subsets $S_j$ of $X$ by taking the union of
    $P_2,P_3$ and the first $j$ points of $P_1$. Then $S_j$ is an increasing
    sequence of sets with union $X$ and $h_1(S_j)=S_{j+1}$. Hence we can
    write $K$ as the increasing union of subgroups $K_j$ for $j\geq 0$
    where $K_j$
    fixes pointwise the complement of the set $S_j$ (namely the points
    $j+1,j+2,\ldots$ on the first prong). But we have $h_1K_jh_1^{-1}=K_{j+1}$
    for $j\geq 0$ (and we can use this expression to define $K_j$ for $j<0$
    too) with $K_j$ strictly contained in $K_{j+1}$, so we can create our tree
    $T_1$ from this strictly ascending HNN extension.

    We can do the same on the other two prongs, expressing the kernels
    $L$ and $M$ as $\cup_{j=-\infty}^\infty L_j$ and $\cup_{j=-\infty}^\infty M_j$
    respectively. Thus we have created an action
    of $H_3$ on three trees and hence on the corresponding
    product space $C$. We now look for the stabilisers of each product vertex
    in $C$, where it is enough just to check those of the form
    $(K_{j_1},L_{j_2},M_{j_3})$ coming from the respective
trees. We can assume that $j_1,j_2,j_3$ are all positive because that
only increases the stabiliser. But for an element $g\in H_3$ to be in the
subgroup $K_{j_1}$ coming from the action on $T_1$, we must have $g$ fixing
all points on the first prong above $j_1$. The same occurs on the other two
prongs with $j_2$ and $j_3$, so $g$ can move at most $j_1+j_2+j_3$ points
of $X$ and thus this stabiliser is finite.
\end{proof}

\section{Obstructions to acting properly on a product of trees}

Having seen in the last section some finitely generated groups which
do act properly on a finite product of locally finite trees, we might
now ask for obstructions that prevent this from happening. We have
already noted the obstruction of $G$ containing an infinite subgroup $H$
where $H$ and all its finite index subgroups have property ($FA$) or
($FA_{lf}$) or even ($FA_{bv}$). More obstructions can be found using
the fact that if our group $G$ had such an action then this would
be an example of a proper and semisimple action on a CAT(0) space.
Thus $G$ will need to possess the consequences of this given in
\cite{bh} III.$\Gamma$.1.1, although this would also apply if $G$ merely
had a proper action on a finite product of trees, not necessary
locally finite. As for the relationship between these obstructions,
we certainly have finitely generated groups which do not have property ($FA$)
but which cannot act properly and semisimply on a CAT(0) cube complex.
However, going the other way,
we note the recent and very interesting example in \cite{kjw} of
a compact non positively curved cube complex where the fundamental group
and all of its finite index subgroups (there is only one) have property (FA).

Another obstruction is failing to have finite asymptotic dimension.
We do not define this here since
the paper \cite{bd} gives an introduction to asymptotic dimension, as
well as a thorough survey of the asymptotic dimension of various groups.
However (as pointed out by the referee) the property of a group $G$
acting properly on a finite product of locally finite trees can be
regarded as the equivariant version of having finite asymptotic dimension.
This can be seen from \cite{bd} Theorem 39 which states that any proper
metric space (such as a finitely generated group with the word metric
from a finite generating set)
with asymptotic dimension at most $n$ admits a coarse
embedding into the product of $n+1$ locally finite trees. (The $+1$ is needed
here because our lamplighter groups have asymptotic dimension 1, but suppose we
have a coarse embedding $f$ of a finitely generated group $G$ in some tree $T$.
Then $f(G)$ will be coarse connected but any coarse connected subspace of $T$
is itself quasi-isometric to some tree $T'$. Thus the coarse equivalence
from $G$ to $T'$ is in fact a quasi-isometry and so $G$ would be
virtually free.) On the other hand, if a finitely
generated group $G$ acts (metrically) properly on a space $X$ which is
a product of $n$ locally finite (unbounded) trees then $X$ has asymptotic
dimension $n$ and we can pull back the metric on $X$ to one on
$G$ using this action. (Strictly speaking this may only be a pseudo-metric
but only finitely many points will have distance 0 from any given point
and these results still apply here.) Now our new metric on $G$ might not
be quasi-isometric to the word metric but it is coarse equivalent, thus
giving us our coarse embedding of $G$ into a space of asymptotic dimension
$n$ and so $G$ has asymptotic dimension at most $n$.  

We have already seen, consistent with these results, that $C_2\wr\Z$ and
indeed $F\wr\Z$ (where $F$ is any finite non-trivial group) acts properly
on a product of two trees. Any group acting properly on a product of $n$
trees clearly acts properly on an $n$ dimensional CAT(0)
cube complex. The question
of when $F\wr H$, for $H$ a selection of various groups, acts properly on a
finite dimensional cube complex has been considered, particularly by
Genevois in \cite{gnphd} and \cite{gntr}. In \cite{gntr}
Proposition 1.3 it is shown that $F\wr F_2$ (and hence $F\wr H$ for
any group $H$ containing a non abelian free group) cannot act properly
on any finite dimensional CAT(0) cube complex and so certainly not on any
finite product of trees. This suggests (as pointed out by the referee)
considering much smaller groups $H$. In \cite{gnphd} Proposition 9.33
it was shown that for every $n\geq 1$ the group $F\wr\Z^n$ acts
(metrically) properly on a $2n$ dimensional CAT(0) cube complex.
Therefore the obvious case for us to try here is $H=\Z^2$, whereupon
we obtain the same dichotomy as we saw for the Houghton groups.

\begin{thm}
  For $F$ any finite non-trivial group,
  the group $G=F\wr \Z^2$ acts properly preserving factors on a product of
  four trees but does not act properly on any finite product of locally finite
  trees.
\end{thm}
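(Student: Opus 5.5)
The proof has two independent halves. For the positive half I will build four strictly ascending HNN decompositions, as in the treatment of $F\wr\Z$ and $H_3$. For the negative half I will compare the quadratic growth of finite lamp groups with the linear growth of orbit sizes in bounded valence trees.

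\emph{The action on four trees.} Write $G=B\rtimes\langle a,b\rangle$ with $B=\bigoplus_{z\in\Z^2}F_z$, where $a,b$ translate the two coordinates. For $i\in\Z$ let $A_i=\bigl(\bigoplus_{z_1\leq i}F_z\bigr)\rtimes\langle b\rangle$. Then $aA_ia^{-1}=A_{i+1}$, the inclusions $A_i<A_{i+1}$ are strict, and $\bigcup_i A_i=K_1:=B\rtimes\langle b\rangle$, the kernel of the first coordinate map $G\to\Z$. So $G=K_1\rtimes\langle a\rangle$ is a strictly ascending HNN extension, and its Bass--Serre tree $T_1$ is the $\Z$-indexed coset construction of Section 2. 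The same construction using $z_1\geq i$, $z_2\leq i$ and $z_2\geq i$ (with $b$ in the role of $a$ in the last two) gives trees $T_2,T_3,T_4$; these trees are not locally finite.

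To check properness, note that an element with nonzero $a$-exponent changes level, so is loxodromic on $T_1$. Every vertex stabiliser is a conjugate $gA_ig^{-1}$. Conjugating by $a,b$ only shifts the index, and a base element $g\in B$ lies in $A_j$ for $j$ large, so every stabiliser lies in some $A_j$. Hence it suffices to check $A_{i}\cap A'_{j}\cap A''_{k}\cap A'''_{l}$. Elements of this intersection have both exponents zero, so they lie in $B$ with support in the box $[j,i]\times[l,k]$, and the intersection is finite.

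\emph{No proper action on locally finite trees.} Suppose the contrary. By Lemma \ref{equiv} the trees $T_1,\dots,T_k$ may be taken of valence at most $N$. Passing to a finite index subgroup $H$ preserving factors, fix a vertex $v=(v_1,\dots,v_k)$, a finite generating set of $H$, and a constant $L$ with $d(hv_i,v_i)\leq L$ for each generator $h$. Let $R_n=[1,n]^2$ and $K_n=H\cap F^{R_n}$; then $|K_n|\geq |F|^{n^2}/[G:H]$. Choose $\lambda,\mu\in H\cap\langle a,b\rangle$ generating a finite index lattice, and a finite set $\Sigma\subset H\cap B$ whose conjugates by lattice elements generate $H\cap B$. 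Then $K_n$ lies in a subgroup of $H\cap B$ generated by conjugates $\gamma s\gamma^{-1}$ with $s\in\Sigma$ and $|\gamma|\leq Cn$, and each such generator moves each $v_i$ by at most $D=O(n)$. Since $K_n$ is finite it has a fixed point in each $T_i$.

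The key tree lemma is this: if convex subtrees each meet the ball $B(v_i,r)$ and have a common point, then their common intersection meets $B(v_i,r)$. To prove it, project $v_i$ to the intersection and follow the geodesic to that projection; each subtree contains the point of that geodesic at distance $r$ from $v_i$. Apply this with $r=D/2$ to the fixed sets of the generators of (the group containing) $K_n$. Every element of $K_n$ then fixes a point $u_i$ within $O(n)$ of $v_i$, so the $K_n$-orbit of $v_i$ lies in the sphere about $u_i$ of radius $O(n)$. That orbit therefore has size at most $N^{O(n)}$, which bounds $[K_n:(K_n)_{v_i}]$.

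Intersecting over the $k$ trees gives $|(K_n)_v|\geq |F|^{n^2}/\bigl([G:H]\,N^{O(kn)}\bigr)\to\infty$. This contradicts the finiteness of the stabiliser $H_v$. I expect the main obstacle to be the $O(n)$ bound on the displacement of the generating conjugates together with the tree lemma: these are what turn quadratic growth against linear growth into a contradiction. The rest, including induction back up from $H$ to $G$, is routine.
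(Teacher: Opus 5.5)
Your four-tree construction is the paper's: its $L_k=\langle t,x_{i,j}\ (i\le k)\rangle$ is your $A_k$, run backwards and with the two coordinates swapped, and the stabiliser check is the same box argument.

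For the obstruction you take a genuinely different route. The paper studies each factor action separately. It discards lineal actions, so each remaining action is focal and has a fixed end. It then chooses a loxodromic $p$ and an elliptic $q$ generating $\Z^2$, and a base vertex on the axis of $p$. It shows that the stabiliser of that vertex contains every lamp in a closed half-plane, and has index exponential in $I$ in the group generated by the lamps of the half-plane pushed out by $I$. The finitely many normal directions are assembled into a Newton polygon. This gives index $\exp(O(R))$ in the group of lamps in the disc of radius $R$, against its order $\ge 2^{R^2}$.

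You never use the type of the action or an invariant end. You only use that a finite group generated by elements each moving $v_i$ by at most $D$ fixes a point within $D/2$ of $v_i$. This follows from your Helly-type lemma together with $d(v,\mathrm{Fix}\,g)=\tfrac12 d(v,gv)$, and both are correct. You then play $\log|K_n|\asymp n^2$ against orbit sizes $N^{O(n)}$. This is shorter and needs neither the case analysis nor the Newton polygon. It is also more general: it only uses that lamp groups over balls of radius $n$ in the base have superlinear logarithmic size, so it adapts to $F\wr H$ for any finitely generated $H$ of superlinear growth. What the paper's argument buys in return is an explicit half-plane description of the stabilisers in each tree.

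The one step you assert without proof is that $K_n=H\cap F^{R_n}$ lies in the subgroup generated by the conjugates $\gamma s\gamma^{-1}$ with $s\in\Sigma$ and $|\gamma|\le Cn$. This is true, but it is not formal. $H\cap B$ need not contain any lamp group $F_z$: for $F$ of order $2$ it can be the subgroup of configurations with an even number of lit lamps. So knowing that $\Lambda$-conjugates of $\Sigma$ generate $H\cap B$ gives no a priori control on which conjugates are needed for an element supported in $R_n$. One has to argue, for instance, with a sublattice acting trivially on $B$ modulo the core of $H\cap B$, and with telescoping products of two-point elements.

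You can bypass this by changing $K_n$. Take $1\ne s\in H\cap B$ and a sublattice $\Lambda'\le H\cap\langle a,b\rangle$ whose translates of the support of $s$ are pairwise disjoint. Let $K_n$ be generated by the $\gamma s\gamma^{-1}$ with $\gamma\in\Lambda'$ of length at most $n$. Then $K_n$ is a direct product of at least $cn^2$ non-trivial cyclic groups, and it is generated by elements of displacement $O(n)$ by construction. The rest of your argument then goes through unchanged. This is essentially the paper's remark that a finite index subgroup contains a copy of a lamplighter group over $\Z^2$.
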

\begin{proof}
  We will assume throughout that $F=C_2$, as in general this just changes the
  notation but does not affect the actual argument.
  We write $G=K\rtimes\langle s,t\rangle$ where
  \[K=\langle x_{i,j}\,|\,i,j\in\Z\rangle,\]
so that each $x_{i,j}$ has order 2 and $sx_{i,j}s^{-1}=x_{i+1,j}$,
$tx_{i,j}t^{-1}=x_{i,j+1}$. Also any element $g\in G$ can be written
uniquely as
\[g=\left( \prod_{(i,j)\in S_g} x_{i,j}\right) s^{m_g}t^{n_g}\]
for $m_g,n_g\in\Z$ and $S_g$ some finite subset of $\Z^2$.

We suppose that $G$ acts properly on a finite product of locally finite trees.
Then by Lemma \ref{equiv} we can assume that $G$ acts properly on
$T_1\times\ldots\times T_D$ where each tree $T_r$ has bounded valence.
We can even assume that $G$ acts properly on such a product
whilst preserving factors because we can drop down to a finite index
subgroup $H$ of $G$ which will do this, but any such $H$ will contain a
copy of $G$.

As $G$ does not contain a free group but is finitely generated, any tree
action of $G$ (whether on a locally finite tree or not) must be lineal
or quasi-parabolic/focal (we can throw away from the product $P$ any action
with a global fixed point as this will not affect whether the
action on $P$ is proper). We further assume for now that each action 
is quasi-parabolic/focal.

Let us consider such an action of $G$ on any tree $T$ which has valence
bounded above by $B$ say. There must be some primitive element $p=s^at^b$
of $\Z^2$ that is acting loxodromically on $T$, as otherwise all elements
of $G$ would act elliptically. But there will also be another element
$q=s^ct^d\in\Z^2$ which acts elliptically and such that $p,q$ generate $\Z^2$.
To see that we can do this, note
that there is much freedom here for the element $p$ but $q$ is uniquely
defined up to its inverse. In order to proceed with definite choices,
we first take a suitable $c$ and $d$ which gives us a primitive
elliptic element $q$, then take any $a$ and $b$ such that $p$ and $q$
generate $\Z^2$. Now $p$ will be loxodromic (as otherwise $\Z^2$
will act elliptically) and we will have $ad-bc=\pm 1$. We can now insist
that $ad-bc=1$ by changing $q$ to its inverse if needed.

This means that for any element $g\in G$ as written in the form above, we can
replace the $s^{m_g}t^{n_g}$ factor in this expression by the same element
but written as $p^iq^j$. We now have that $g$ is elliptic in our action
if and only if $i=0$. Replacing the loxodromic element $p$ with its inverse
if necessary which changes nothing else above (as long as we remember to
invert $q$ too so that $ad-bc$ is still 1), we take the axis $A$
of $p$ and consecutively label the vertices $\{v_i\,|\,i\in\Z\}$
in such a way that
the attracting fixed point of $p$ is the fixed end $E:=(v_0,v_1,v_2,\ldots)$
of this action. Then for any $i\in \Z$ we will have $p(v_i)=v_{i+\tau}$, where
$\tau\in\N$ is the translation length of $p$.
Note that as $q$ commutes with $p$ and is elliptic, it fixes the axis
$A$ pointwise.

This still allows a choice of base vertex $v_0$ by shifting the subscript,
so we proceed as follows: the element $x_{0,0}$ is elliptic and therefore
fixes some vertex of $T$, but every element of $G$ fixes the end $E$.
Therefore $x_{0,0}$ fixes the vertices $v_i$ for large $i$ and any
group element fixing a particular $v_{i_0}$ also fixes $v_i$ for all
$i\geq i_0$. We therefore choose $v_0$ such that $x_{0,0}$ fixes $v_0$ but not
$v_{-1}$ (if $x_{0,0}$ fixes all of $A$ then so will all $x_{i,j}$, whereupon
this action of $G$ would be lineal).
We are now interested in the subgroup $V_0:=Stab(v_0)$ which will lie in
$\langle K,q\rangle$,
so we first consider which other $x_{i,j}$ do or do not fix $v_0$.

On taking the elements
$\gamma_{i,j}:=p^iq^jx_{0,0}q^{-j}p^{-i}$ over all $i,j\in\Z$, we see that these
are just the elements $\{x_{i,j}\,|\,i,j\in\Z\}$ written differently because
$\langle p,q\rangle=\langle s,t\rangle$. Now a given $\gamma_{i,j}$
certainly fixes $p^i(v_0)=v_{i\tau}$. If moreover $i\leq 0$ then
$\gamma_{i,j}$ will also fix $v_0$ and hence lies
in $V_0$. If however $i>0$ then by reversing this argument, claiming that
$\gamma_{i,j}$ also fixes $v_0$ would imply that
$x_{0,0}$ fixes $v_{-i\tau}$ and hence also $v_{-1}$ which is a contradiction.
Hence for any $I\in\Z$
let us now define the subset $S_I$ of $K$ as $\{\gamma_{i,j}\,|\,i\leq I\}$
and the subgroup $K_I\leq K$ as $\langle S_I\rangle$. We have just argued
that $V_0\cap K_I$ is all of $K_I$ for $I\leq 0$ but not otherwise. Now we
bring in the bounded valence of the tree $T$: the action of the subgroup
$K_1$ is such that the vertex $v_\tau$ is fixed by all of $K_1$, so $v_0$
can only go to a descendant of distance $\tau$ from $v_\tau$ of which there
are at most $B^\tau$ possibilities. So by Orbit - Stabiliser, the index
of the stabiliser $V_0\cap K_1$ in $K_1$ is at most $B^\tau$. We can continue
this argument to get that $V_0\cap K_I$ has index at most $B^{I\tau}$ in $K_I$.
Indeed we can say that if $S$ is any subset of the elements $\gamma_{i,j}$
such that we have $I\in\Z$ with $i\leq I$
for any $\gamma_{i,j}\in S$ then for the action of $\langle S\rangle$ on $T$,
the stabiliser $V_0\cap\langle S\rangle$ has index at most $B^{I\tau}$ in
$\langle S\rangle$ if $I>0$ and is all of $\langle S\rangle$ if $I\leq 0$.

We now need to convert this statement back from the elements $\gamma_{i,j}$
to the elements $x_{i,j}$ in order to be able to consider different tree actions
of $G$ simultaneously. We have that $\gamma_{i,j}=x_{ia+jc,ib+jd}$ so that
the subset $S_I$ defined above contains $x_{m,n}$ exactly when $dm-cn\leq I$.
In particular we have that $V_0\cap K_I$ is all of $K_I$ exactly
when $dm-cn\leq 0$, which can be thought of those lattice points
$(m,n)\in\Z^2$ which lie on or to one side of the line $\ell$ through $(0,0)$
and $(c,d)$. We can argue similarly for $dm-cn\leq I$ where we take the
appropriate line parallel to $\ell$. As for knowing which is the correct
side, notice that either $(d,-c)$ or $(-d,c)$ could be taken as a normal
vector to $\ell$, but $(d,-c)$ lies on the same side of $\ell$ as $(a,b)$
by taking the dot product (since we insisted $ad-bc=1$). Thus it is the
other half plane which gives the points $(m,n)$ such that $dm-cn<0$ and
we can argue similarly for $dm-cn\leq I$ where we take the
appropriate line parallel to $\ell$ along with the half plane on the other
side of our normal $(d,-c)$ which we have obtained from this action.

Now we examine all of these $d$ actions on our trees together. As described
above, the action on tree $T_r$ comes with a base vertex $v_0^{(r)}$ and
our normal as obtained above, which we write as the vector
$w_r=(d_r,-c_r)\in\Z^2$, where $\langle s^{c_r}t^{d_r}\rangle$ generates
precisely the elements in $\langle s,t\rangle$ which
act elliptically on the tree $T_r$. Also we assume that $w_r$ is oriented
towards the same half plane as is $(a_r,b_r)$, where 
the attracting fixed point of the loxodromic element
$s^{a_r}t^{b_r}$ is the fixed end of
the action of $G$ on $T_r$. From these $D$ vectors $(d_r,-c_r)$
(which may be repeated
but only finitely many times so it does not matter), we create a Newton
polygon $P$. This is a compact convex polygon in $\R^2$ such that all
vertices lie in $\Z^2$ and such that the collection of vectors given
by the outward pointing normal obtained from each edge of $P$ is exactly our
$D$ vectors, up to rescaling each vector by a positive number.

To see the existence of $P$, first rescale all directions so that they
are unit length and consider them as points on the unit circle $S^1$. On
taking tangents at each point, we obtain a polygon given by the intersection
of each of the insides of these tangents, where an inside is the closed
half space with boundary the tangent line and containing the origin. This will
result in a convex polygon with the origin as an interior point. If it is not
compact then add finitely more such actions of $G$ on trees (whereupon we
still have properness of the action on the product) with appropriate
directions until it is compact. Now note that rationality of this process
means that every vertex of our compact polygon lies in
$\Q^2\setminus\{(0,0)\}$, so we can multiply by a large positive
integer until each vertex lies in $\Z^2\setminus\{(0,0)\}$ and such that
the unit disc lies in this polygon, which will be our initial Newton polygon
$P$. Note that we can further rescale $P$ by any integer $R\in\N$,
resulting in the enlarged Newton polygon $P_R$ with all the above properties,
and now such that $P_R$ contains every lattice point $(m,n)$ of $\Z^2$ with
$m^2+n^2\leq R^2$.

Let us now consider the elements in $G$ which lie in the stabiliser of
the product vertex $(v_0^{(1)},\ldots , v_0^{(D)})$, i.e. the intersection
of each stabiliser $V_0^{(r)}$ of $v_0^{(r)}$ for $r=1,2,\ldots ,D$. Our
assumption that this action is proper means that
$V_0^{(1)}\cap\ldots\cap V_0^{(D)}$ has order $C\geq 1$ say. First note
that we can assume any such element is actually in $K$ since every other
element will be loxodromic in some tree action as soon as there are two
non-opposite normal vectors. This will occur because we made sure $P$ was
compact. Then on taking the Newton polygon $P_R$ for some large
$R\in\N$ as defined above, we will now consider the finite subgroup $F_R$
of $K$ generated by the $x_{m,n}$ over all lattice points $(m,n)$ with modulus
at most $R$. By construction, every such point $(m,n)$ lies inside or
on $P_R$ and hence lies in the half plane away from the outward facing normal
of each edge. In particular these $(m,n)$ satisfy
$d_rm-c_rn\leq RI_r$, where $I_r$ is the value obtained from this edge
of the original polygon $P$. Note that $I_r>0$ (because $(0,0)$ is
an interior point of $P$)
and is independent of $R$, given that we multiply $I_r$ by $R$ to obtain
the correct value in the rescaled polygon $P_R$, as is $c_r$ and $d_r$.
Now for a given $R$, we have for $1\leq r\leq D$
that the stabiliser in $F_R$ of the vertex
$v_0^{(r)}$ has index at most $B^{RI_r\tau_r}$ where $\tau_r$ and of course $B$
are also independent of $R$.

If we now apply this for each of our $D$ actions, this implies that the
subgroup of $F_R$ fixing $(v_0^{(1)},\ldots , v_0^{(D)})$ has index
at most $B^{DRI_0\tau_0}$, where $I_0$ is an upper bound for the $D$ values of
$I_r$ and $\tau_0$ is for $\tau_r$.
Thus this subgroup has order at least $|F_R|/B^{KR}$ where $K$ is
independent of $R$. But the number of integer lattice
points in the closed disc of radius $R$ is bounded below by at least $R^2$
and so $F_R$ has order at least $2^{R^2}$. Thus the stabiliser in $F_R$ of this
product vertex has order tending to infinity as $R$ tends to infinity.
Hence we have a contradiction as soon as this order is more than $C$.

If any of the original tree actions were lineal but we still tried to go
through the above process for such an action, we would find that every
element in $G$ fixes setwise the two limit points of this action. If
they are fixed pointwise then every elliptic element of $G$, which will
include all elements of $K$, will fix every vertex on the axis of
of the resulting
loxodromic element. Thus we can ignore this action when considering
stabilisers. If some of our lineal actions swap the two limit points,
we can drop to a finite index subgroup $H$ of $G$ where all lineal actions
fix both limit points pointwise, then use the trick above that $H$ will
contain a copy of $G$ and hence restrict the original action of $G$ to this
copy.

As for the proper action of $G$ on four trees, merely mimic the above
with the four horizontal and vertical normals: namely first write
$G$ as $L\rtimes\langle s\rangle$ and $L$ as $\cup_{k=-\infty}^\infty L_k$ where
$L_k=\langle t,x_{i,j}
\mbox{ for }i\leq k\mbox{ and }j\in\Z\rangle$. Then
create the
Bass - Serre tree action from this expression of $G$ as a strictly
ascending HNN extension so that $s$ acts loxodromically. The
stabiliser of a vertex $v_k$ say on the axis of $s$ will be the subgroup
$L_k$. On now running this backwards and then swapping $s$ and $t$ (and $i$ and
$j$), the stabiliser of the product vertex given by four points with a pair
on each axis will be the subgroup generated by various $x_{i,j}$ where there are
bounds both above and below for $i$ and $j$, thus this stabiliser is finite.
\end{proof}

\section{\bf Obstructions for hyperbolic groups}  

We have been using the fact that when a group $G$ acts on a product $P$
of $n$ trees preserving factors, it acts properly if and only if
for all vertices $(v_1,\ldots ,v_n)\in P$, we have
the intersection of stabilisers $G_{v_1}\cap \ldots \cap G_{v_n}$ is finite.
Thus when $G$ is a torsion free group, it
acts properly on a finite product $P$ of trees if and only if it acts
freely on the vertices of $P$ (and indeed if and only if $G$ acts freely
on $P$ because if some element $g\in G$ stabilises a cube then a power
of $G$ will stabilise its vertices).

Using this, we can give an alternative criterion for groups of this kind
to act properly on $P$ if they preserve factors,
which nevertheless is very useful
because it involves looking at the components of the
elements of $G$ rather than the vertices
of $P$.
\begin{prop} \label{hyel}
  Suppose that the group $G$ does not contain an infinite torsion subgroup
  and acts on a finite product $P=T_1\times \ldots \times T_n$ of
trees preserving factors. Then $G$ acts properly on $P$
if and only if for any infinite order element
$g=(g_1, \ldots , g_n)$ in $G$ with each $g_i$ regarded as an element
of $Aut(T_i)$ under the natural projection, we have that at least
one $g_i$ from $g_1,\ldots ,g_n$ acts as a loxodromic isometry on the
tree $T_i$.
\end{prop}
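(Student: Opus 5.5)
The plan is to prove the two implications separately. Both rest on the dichotomy recalled in Section 2: every automorphism of a tree acts either elliptically or loxodromically. Here elliptic means it fixes a point of the tree, which is a vertex or the midpoint of an inverted edge. The criterion from the start of this section is the other ingredient: since $G$ preserves factors, the stabiliser of a vertex $(v_1,\ldots ,v_n)$ of $P$ is $G_{v_1}\cap\ldots\cap G_{v_n}$.

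For the forward direction, suppose $G$ acts properly and that some infinite order $g=(g_1,\ldots ,g_n)$ has every $g_i$ non-loxodromic. By the dichotomy each $g_i$ is elliptic, so it fixes either a vertex of $T_i$ or the midpoint of an edge that it inverts. In the second case $g_i^2$ fixes both endpoints of that edge. So in every case $g_i^2$ fixes some vertex $v_i\in T_i$. Then $g^2$ lies in $G_{v_1}\cap\ldots\cap G_{v_n}$, the stabiliser of the vertex $(v_1,\ldots ,v_n)$ of $P$. Since $g$ has infinite order, so does $g^2$, and this stabiliser contains the infinite subgroup $\langle g^2\rangle$. This contradicts properness. (Alternatively one could pass to the barycentric subdivisions throughout, as the paper permits.)

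For the converse, suppose every infinite order element has a loxodromic component, and suppose for contradiction that the action is not proper. Then some vertex $\boldsymbol v=(v_1,\ldots ,v_n)$ has infinite stabiliser $G_{\boldsymbol v}$. By the hypothesis on $G$, this infinite subgroup is not a torsion group, so it contains an element $g$ of infinite order. Each component $g_i$ fixes the vertex $v_i$, so it is elliptic and in particular not loxodromic. This contradicts the assumption, so every vertex stabiliser is finite and $G$ acts properly in the sense of Definition \ref{prpd}.

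Neither step poses a real obstacle. The only points needing care are the two just handled. The first is edge inversions in the forward direction, dealt with by squaring $g$. The second is that the converse genuinely uses the hypothesis that there is no infinite torsion subgroup: without it, an infinite torsion vertex stabiliser such as the locally finite groups of Section 2 would make the criterion hold vacuously while the action fails to be proper.
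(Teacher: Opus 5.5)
Your proposal is correct and follows essentially the same argument as the paper. An infinite order element that is elliptic in every factor fixes a vertex of $P$, so the stabiliser of that vertex is infinite; conversely, a loxodromic component stops every infinite order element from fixing a vertex, so vertex stabilisers are torsion and hence finite by the hypothesis on $G$. Your squaring of $g$ to deal with edge inversions, and your explicit use of the no-infinite-torsion hypothesis, make precise two points that the paper's proof leaves implicit.
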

\begin{proof} If we have an infinite order element $g=(g_1,\ldots ,g_n)\in G$   
where each $g_i$ acts as an elliptic element on $T_i$ then there will
be a vertex $v_i\in T_i$ fixed by $g_i$ so that the point 
$(v_1,\ldots ,v_n)\in P$ is fixed by $g$ and all its powers, thus the
action is not proper. Conversely if $g_i$ acts as a loxodromic element
on $T_i$ then no vertex $v_i\in T_i$ is fixed by $g_i$, so no vertex
${\boldsymbol v}=(v_1,\ldots ,v_k)$ in $P$ is fixed by $g$. Thus the
stabiliser of ${\boldsymbol v}$ can only contain torsion elements.
\end{proof}  

If $G$ in Proposition \ref{hyel} acts on a finite product of trees without 
preserving factors then this result is still useful, because as mentioned
we can take a finite index subgroup $H$ of $G$ which does act preserving
factors and then see if this condition applies to $H$.

Having considered in the previous sections groups which contained infinite
torsion subgroups, we now concentrate on those that do not, such as
hyperbolic groups and groups that are virtually torsion free. We know
that all finitely generated virtually free groups have a proper action
(on a single bounded valence tree), so the next
family of groups we would expect to consider are the fundamental groups
$S_g$ of the closed orientable surfaces $\Sigma_g$ for genus $g\geq 2$.
It is straightforward to show that these groups act on a product
of two trees with finite stabilisers if we allow one of these trees
to have infinite valence:

\begin{ex} \label{srf2} 
Taking $g=2$ (as $S_g$ is a subgroup of $S_2$ for $g\geq 2$), we have that
$S_2$ acts on its Bass - Serre tree $T_1$ via
the amalgamation $F_2*_{\Z} F_2$ for $\langle a,b\rangle$ the first $F_2$
factor and $\langle c,d\rangle$ the second, where we have $[a,b][c,d]=id$.
Although $T_1$ is of course
not locally finite, the elliptic elements of this action are all
conjugate into the vertex groups $\langle a,b\rangle$ or $\langle c,d\rangle$.
Now consider the homomorphism from $S_2$ onto the rank 2 free group
$\langle x,y\rangle$ given by $a,d\mapsto x$ and $b,c\mapsto y$. As
$\langle x,y\rangle$ acts freely and purely loxodromically on a locally
finite tree $T_2$, we have that our surface group $S_2$ acts on $T_2$ where
any non identity element in $\langle a,b\rangle$ or $\langle c,d\rangle$
is loxodromic. This is also true for any non identity element conjugate in
$S_2$ into $\langle a,b\rangle$ or $\langle c,d\rangle$ so by Proposition
\ref{hyel} we have that $S_2$ (and $S_g$ for $g\geq 2$) acts properly 
preserving factors on a product of two trees $T_1\times T_2$,
where the second is locally finite but the first is not.
\end{ex}

In fact these surface groups and many more groups besides are
known to act metrically properly on a finite product of
infinite valence trees. In \cite{drnjan} it was shown that 
Coxeter groups have this property by utilising their action on the Davis
complex. Thus any group that virtually embeds in a Coxeter group (which
includes any group virtually embedding in a RAAG) will also have this
property. However the question of whether such an action exists when all
factor trees are locally finite was raised in \cite{flss} and seems an
important and difficult question which we will consider. As an indication
of why this might be so, note that in Example \ref{srf2} our action on
the locally infinite tree has finitely generated vertex and edge groups,
but this is never true for $S_g$ on a locally finite tree.
\begin{ex} In any splitting of $S_g$
  with respect to an action on a bounded valance (or locally finite)
  tree, all edge and vertex groups are infinitely generated. To see this,
  note that local finiteness implies that all these subgroups are commensurable
  to each other in $S_g$, so if one is finitely
  generated then they all are. Moreover any vertex group is commensurable
to any of its conjugates. But $S_g$ is a torsion free hyperbolic 
group where any finitely generated subgroup is quasi-convex. In
\cite{cnmh} Theorem 3.15
it is shown that an infinite quasi-convex subgroup of a hyperbolic
group has finite index in its commensurator. Hence
if such an action on a bounded valence tree existed with finitely generated
vertex/edge stabilisers then these would either have finite index in $G$,
so that actually the splitting is trivial, or the stabilisers are themselves
trivial in which case $S_g$ would be free.
\end{ex}

In order to say something about possible proper actions of $S_g$ on products
of locally finite trees, we first consider torsion free hyperbolic groups
or more generally groups $G$ which are finitely
generated and torsion free but which do not contain $\Z\times\Z$.
We obtain an obstruction for such an action, which is not unlike
\cite{flss} Theorem 15.

\begin{thm} \label{injcloc}
  Suppose the torsion free and finitely generated group $G$ acts properly
  on the product of $k$ locally finite
trees $P=T_1\times\ldots\times T_k$ preserving factors
but does not contain $\Z\times\Z$. Then 
for each non identity element $g\in G$ there is an action of $G$ on some
bounded valence tree $T$ which is minimal, faithful and
such that $g$ acts loxodromically.
\end{thm}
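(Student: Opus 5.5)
The plan is to read the tree $T$ off from one of the given factors, namely the core of a factor on which $g$ is loxodromic. Because $G$ is torsion free, every non-identity $g$ has infinite order. Because the action on $P$ is proper and preserves factors, Proposition \ref{hyel} gives an index $i$ such that the $i$th coordinate $g_i$ acts loxodromically on $T_i$. Let $J$ be the set of all such indices for this $g$.

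For each $j\in J$ the group $G$ contains a loxodromic element on $T_j$. Since $G$ is finitely generated, the action on $T_j$ is neither elliptic nor parabolic, so the core $C_G(T_j)$ of Definition \ref{cre} exists. This core is a minimal invariant subtree, and it has bounded valence by the argument of Proposition \ref{equprp}. The axis of $g$ lies in the core, so $g$ is still loxodromic there. Minimality and bounded valence are therefore automatic, and the real content of the theorem is faithfulness.

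Write $K_j$ for the kernel of the action of $G$ on $C_G(T_j)$, equivalently on $T_j$. The key algebraic step uses the hypothesis that $G$ contains no $\Z\times\Z$.
\begin{itemize}
\item Suppose $M,N$ are non-trivial normal subgroups of $G$ with $M\cap N=\{e\}$. Then $[M,N]\subseteq M\cap N$ is trivial, so $M$ and $N$ commute elementwise.
\item Picking non-identity $m\in M$ and $n\in N$, torsion freeness shows that $\langle m,n\rangle$ is either $\Z\times\Z$ or infinite cyclic. The cyclic case would give a common power $m^a=n^b\neq e$ lying in $M\cap N$, which is impossible.
\item Hence any two non-trivial normal subgroups of $G$ intersect non-trivially. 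By induction, the intersection of any finitely many non-trivial normal subgroups is non-trivial.
\item On the other hand $\bigcap_{r=1}^k K_r$ fixes every vertex of $P$. By properness and torsion freeness it is trivial, so at least one $K_r$ is trivial.
\end{itemize}

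What remains, and what I expect to be the main obstacle, is to arrange that the faithful factor is one on which $g$ is loxodromic. Suppose, for a contradiction, that $K_j\neq\{e\}$ for every $j\in J$. Then $N:=\bigcap_{j\in J}K_j$ is a non-trivial normal subgroup, and I would aim to contradict properness.

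My first attempt would pick $n\in N$ with $n\neq e$ and look at the infinite order elements $n$, $g^mng^{-m}$ and $gn$. The element $gn$ agrees with $g$ on every $T_j$ with $j\in J$, so its behaviour there is loxodromic; the difficulty is entirely on the factors outside $J$, where $g$ is elliptic. There I would combine the elliptic element $g$ with elements of $N$ to try to produce an infinite order element of $G$ that is elliptic in every factor, contradicting Proposition \ref{hyel}. The tools would be:
\begin{itemize}
\item the fact that two elliptic elements whose fixed subtrees meet generate an elliptic subgroup;
\item the fact that centralisers in $G$ are small, since $G$ contains no $\Z\times\Z$.
\end{itemize}

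If this direct approach fails, the fallback is a blow-up construction. I would replace the vertices of the Bass--Serre graph of groups coming from $C_G(T_j)$, for some $j\in J$, by the minimal subtrees of the vertex stabilisers acting on a faithful factor $T_r$. The point would be to produce a single bounded valence tree on which $G$ acts faithfully and on which $g$ remains loxodromic, and then pass to its core to restore minimality.
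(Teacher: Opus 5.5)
Your setup and your normal-subgroup lemma are correct. The gap is the step that actually proves the theorem, namely that $K_j=\{e\}$ for some $j\in J$: you only announce it. Nothing in the tools you list makes the relevant fixed sets meet, because you never use local finiteness. That hypothesis is indispensable here. In Example \ref{srf2}, where one factor is not locally finite, the element $a$ is loxodromic only on $T_2$, and the action on $T_2$ has non-trivial kernel, so the claim you are aiming for is false without it. (Also, $gn$ cannot help, since it agrees with $g$ on the factors in $J$; the right element is a commutator.)

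The missing idea is the paper's device. Take $w\neq e$ in $N=\bigcap_{j\in J}K_j$.
\begin{itemize}
\item For $r\notin J$, some power of $g$ fixes a vertex $v_r$ of $T_r$. The ball of radius $d(v_r,wv_r)$ about $v_r$ is finite, so a further power $g^{n_r}$ fixes it pointwise.
\item Then $g^{n_r}$ and $wg^{n_r}w^{-1}$ both fix $wv_r$, hence so does $[w,g^{n_r}]$, and the same holds for every multiple of $n_r$.
\item With $M=\prod_{r\notin J}n_r$, the element $[w,g^M]$ is elliptic on every factor outside $J$ and trivial on $C_G(T_j)$ for $j\in J$. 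So it fixes a vertex of $P$.
\item It is non-trivial. Otherwise $\langle w,g^M\rangle$ would be torsion-free abelian, hence cyclic since $G$ has no $\Z\times\Z$. That gives $w^a=g^{Mb}$ with $a,b\neq 0$, which is impossible because $w$ is trivial and $g$ is loxodromic on $C_G(T_j)$.
\end{itemize}
Properness and torsion freeness now give the contradiction. The blow-up fallback is unnecessary, and as sketched there is no reason the resulting tree would be locally finite, minimal or faithful.

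There is a second error. The kernel on $C_G(T_j)$ is \emph{not} equivalently the kernel on $T_j$. An element can act trivially on the core while moving vertices off it; this is the ``decoration'' phenomenon in the note after the theorem. Faithfulness on the minimal subtree is exactly what has to be proved. So $K_j$ must be the kernel on the core. The argument above works with that choice, since it only uses that elements of $K_j$ fix vertices of $T_j$ and that $g$ is loxodromic on $C_G(T_j)$.

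Once repaired, your route differs from the paper's. The paper first discards factors until each surviving $T_i$ carries an element $h$ loxodromic on $T_i$ alone. It then uses $[w,h^N]$ with $w$ in the kernel on that single core, and concludes that every surviving core is faithful. You use $g$ itself in place of $h$, and your normal-subgroup lemma supplies a $w$ that dies on all factors in $J$ at once, so no pruning is needed.
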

\begin{proof}
We first remove any tree where the
projection action of $G$ has a global fixed point, but having done that
we also remove in turn any tree $T_i$ where the action of $G$ on $T_i$ has the
following property: for any element $g\in G$ which is loxodromic in this
action, there is some other tree left in the product (maybe depending on $g$) 
where $g$ also acts loxodromically. On removing this tree $T_i$ from the
product, we have by Proposition \ref{hyel} that $G$ is still acting
properly and we continue until no such tree is left, whereupon we revert
to the original notation $T_1\times\ldots\times  T_k$ for the product
of trees thus obtained.

We now replace each of these remaining trees $T_i$ by its core $C_G(T_i)$,
though we will continue to call it $T_i$. This
has bounded valence and $G$ acts minimally on it. Moreover
the action on the ensuing product of trees 
$T_1\times\ldots\times T_k$ is still proper
by Proposition \ref{hyel}, because the process of restricting an action
to the core does not affect whether an element is loxodromic or elliptic.

Now suppose that the action of
$G$ on $T_1$ is not faithful, so there exists an infinite order element
$w\in G$ acting trivially on $T_1$. But there will exist other elements of
$G$ acting loxodromically on $T_1$ or else it would have been removed in the
first stage above. Furthermore there is some element $h\in G$ acting
loxodromically on $T_1$ and such that the action of $h$ is elliptic on all
of $T_2,\ldots ,T_k$ (or else $T_1$ would have been removed at some point
during the second stage). This means that no positive power $h^n$ can
commute with $w$ in $G$, as otherwise $\langle h^n,w\rangle\cong\Z\times\Z$
unless $h^{rn}=w^s$ for some non zero $r,s$ but this would imply that the
action of $h$ on $T_1$ has finite order.

Hence on $T_2,\ldots ,T_k$ we have that $h$ is elliptic, as is $whw^{-1}$.
We now use local finiteness of our trees: observe that
for each $2\leq j\leq k$ there is $n_j$ such that $h^{n_j}$ and
$wh^{n_j}w^{-1}$ have a common fixed point, because if $h$ fixes the vertex
$v$ then some power of $h$ fixes all edges incident at $v$ and then a further
power will fix the vertices at distance 2 away from $v$ and so on.
Consequently
$wh^{n_j}w^{-1}h^{-n_j}$ is also elliptic in the action on $T_j$. But this
argument also applies to any multiple of $n_j$, thus for $N=n_2\ldots n_k$
we have that $wh^Nw^{-1}h^{-N}$ is elliptic on $T_2,\ldots ,T_k$. Moreover
on $T_1$ this element acts as the identity because $w$ does, but we noted above
that it is not the identity element in $G$. So the action of $G$ on
$T_1\times\ldots\times T_k$ is not proper by Proposition \ref{hyel}.

Thus all projection actions are faithful and minimal. Moreover given any
non identity element $g$ in $G$, we can take the action of $G$ on one of
the factor trees $T$ where $g$ acts loxodromically.
\end{proof}
Note: The point of this theorem is that it is significantly stronger
to ask that the action is both faithful and minimal. Indeed if we just
require a minimal action, as the surface group $S_2$ (and $S_g$ for higher
genus $g$) is a residually free group, we can pick any non identity
$g\in S_2$ and take a homomorphism from $S_2$ to a free group $F_r$
which does not vanish on $g$. We can then let $F_r$
act on its Cayley graph, thus $S_2$ acts on this graph too with $g$ a
loxodromic element and this is a minimal action. Alternatively if we want
a faithful action then
we can now ``decorate'' this action of $S_2$ to convert it
into a faithful action of $S_2$ on a bigger (but still locally finite)
tree, though this action will certainly not be minimal.

In the next section we will show that this does indeed hold for the groups
$S_g$. However
we finish this section by noting that things are very different for groups 
containing 
$\Z\times\Z$. Taking $G=F_2\times\Z$ with $\Z=\langle z\rangle$, we have that
$G$ acts properly (and even cocompactly) on a product of two locally
finite trees and is also residually free. But there is no faithful
minimal action of $G$ on any tree whatsoever. This is because if $G$ acts
on a tree with $z$ a loxodromic element then any $g\in G$ sends the axis
of $z$ to itself, so the core must be the axis of $z$ and then the 
minimal action
is not faithful. If however $z$ acts elliptically then for all loxodromic
elements $g$, we have that $z$ sends the axis of $g$ to itself and
preserves the direction (else it conjugates $g$ to its inverse), so
$z$ must fix this axis pointwise and hence fix the whole core pointwise.

\section[Fields of positive characteristic]{Fields of positive 
characteristic and the Bruhat - Tits tree}

Question 1 of \cite{flss} asks: let $S_g$ be a closed surface group of
genus $g\geq 2$. Is there a discrete and faithful representation of
$S_g$ into $Aut(Y)$ for $Y$ a finite product of bounded valence
trees? 
As $S_g$ is torsion free and all trees considered here are
locally finite, a discrete and
faithful representation of $S_g$ is the same as a proper action. 
Moreover Lemma \ref{equiv} says it does not matter whether we use
locally finite or bounded valance trees.
Thus the question is equivalent to asking whether $S_g$ acts properly on a 
finite product of locally finite trees, and hence equivalent to whether
$S_2$ does, by induced actions. (This question also appears
in \cite{wshlv} as Problem 10.13 for the case of a product of two locally
finite trees.)
It is pointed out in Theorem 3 of \cite{flss} that if we can find a
faithful representation of a finitely generated group $G$ into
$PGL(2,K)$ for $K$ a global field of characteristic $p>0$, say
$\F_p(x)$, then $G$ acts properly on a finite product of locally
finite trees. This is because for each valuation $v$ of $K$, the group
$PGL(2,K)$ will act faithfully on its Bruhat - Tits tree, for instance
the regular tree $T_{p+1}$ when $K=\F_p(x)$. Although this will not
be a proper action in general, the finite generation of $G$ means we
can take finitely many valuations on $K$ to get a proper action on the product
of these trees. (If $K$ is a global field of characteristic zero, namely
a number field, the above still works except that those elements of $G$
whose trace is an algebraic integer will be elliptic in every action.)

In the case where $G=S_g$, the existence of such a representation
seems a hard question, but on extending the field $\F_p(x)$ by one
transcendental element we can ask whether we have an embedding of $S_g$
using this new field. Indeed Theorems 4 and 5 in \cite{flss} 
show that for every prime $p$ at least 5, there is a faithful embedding
of $S_2$ in $PGL(2,K)$ where $K$ is a finite extension of $\F_p(x,y)$
and for any characteristic $p$ field $k$ of transcendence degree at least
2, there is a faithful embedding of $S_2$ in $PGL(n,k)$ for some $n$.
Here we will improve on these results by giving a completely explicit
faithful embedding of $S_2$ in $SL(2,K)$, and hence in $PSL(2,K)$
for $K=\F_p(x,y)$ where $p$ is any prime. To obtain faithfulness
of the representation, we use the following result of Shalen.

\begin{prop} (\cite{sha} Proposition 1.3) \label{shl}
Suppose $G_1*_HG_2$ is a free product with abelian amalgamated
subgroup $H$ and suppose we have faithful representations
$\rho_i:G_i\injects SL(2,\F)$ and
$i=1,2$ over any field $\F$ such that\\
(a) $\rho_1$ and $\rho_2$ agree on $H$,\\
(b) $\rho_1(h)=\rho_2(h)$ is diagonal for all $h\in H$ and\\
(c) For all $g\in G_1\setminus H$ we have that the bottom left hand entry
of $\rho_1(g)$ is non zero, and similarly for the top right hand entry
of $\rho_2(g)$ for all $g\in G_2\setminus H$.

Then $G_1*_HG_2$ embeds in $SL(2,\F(y))$ where $y$ is a transcendental
element over $\F$. 
\end{prop}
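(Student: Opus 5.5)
The plan is to conjugate $\rho_2$ by a diagonal matrix involving the transcendental $y$ and then run a ping-pong style degree argument in $y$. Define $\rho:G_1*_HG_2\rightarrow SL(2,\F(y))$ by $\rho=\rho_1$ on $G_1$ and $\rho(g)=D\rho_2(g)D^{-1}$ on $G_2$, where
\[D=\sma{cc} y&0\\0&y^{-1}\fma .\]
Hypotheses (a) and (b) make this well defined: $D$ commutes with the diagonal matrices $\rho_1(h)=\rho_2(h)$ for $h\in H$, so the two definitions agree on $H$ and $\rho$ is a homomorphism by the universal property of the amalgam.

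For faithfulness, I would use the normal form for amalgamated free products. Any element not conjugate into a factor is conjugate to a cyclically reduced word $g=a_1b_1a_2b_2\cdots a_nb_n$ with $n\geq 1$, $a_i\in G_1\setminus H$ and $b_i\in G_2\setminus H$. Elements lying in a factor are handled by faithfulness of $\rho_1$ and $\rho_2$, and conjugation does not affect triviality. So it suffices to show $\rho(g)\neq I$, and I would aim for the stronger statement that some entry of $\rho(g)$ has nonzero leading term in $y$ of degree exactly $2n$.

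The computation runs as follows. Each $\rho(a_i)=A_i$ has entries in $\F$ (constant in $y$) with bottom-left entry nonzero. Each $\rho(b_i)=D B_i D^{-1}$ equals
\[\sma{cc} \beta_{11}&y^2\beta_{12}\\ y^{-2}\beta_{21}&\beta_{22}\fma ,\]
with $\beta_{12}\neq 0$ by (c). Viewed as a Laurent polynomial in $y$, its dominant part is the rank-one matrix $y^2\beta_{12}E_{12}$. Multiplying out, the top-degree term of $\rho(g)$ is $y^{2n}$ times a product of the form $A_1E_{12}A_2E_{12}\cdots A_nE_{12}$, up to the nonzero scalars $\beta_{12}^{(i)}$. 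Since $E_{12}A E_{12}=(A)_{21}E_{12}$, this product equals $(A_2)_{21}\cdots(A_n)_{21}\,A_1E_{12}$. By (c) the scalar factors are nonzero, and $A_1E_{12}$ has first column zero and second column equal to the first column of $A_1$, which is nonzero. Hence some entry of $\rho(g)$ contains a nonzero $y^{2n}$ term, while every other contribution has strictly lower degree. So $\rho(g)$ is not a constant matrix and in particular $\rho(g)\neq I$.

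The main obstacle is bookkeeping the degrees carefully. Each factor is a Laurent polynomial in $y$ with entries of degree at most $2$, and I need to confirm that the cross terms genuinely have degree less than $2n$, so that no cancellation with the leading term occurs. A cleaner route is to note that the product of $n$ factors each of degree at most $2$ has degree at most $2n$, with the degree-$2n$ coefficient exactly the product of the degree-$2$ coefficients; I will carry this out entrywise.

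A residual care point is the reduction to cyclically reduced words. Words of even length starting in $G_2$ can be conjugated into the form above, and odd-length words conjugate either to such a word or into a factor. Finally, transcendence of $y$ over $\F$ is what lets us treat $\F(y)$ as a field of rational functions, so that a nonzero leading coefficient in $y$ forces a nonzero element.
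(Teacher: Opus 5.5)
Your proposal is correct and is essentially the paper's argument. The paper conjugates $\rho_1$ by $\mbox{diag}(1,y)$ instead of $\rho_2$ by $\mbox{diag}(y,y^{-1})$; after a global conjugation by $\mbox{diag}(1,y^2)$ your representation is exactly the paper's with $y$ replaced by $y^2$. The paper also reduces to cyclically reduced normal forms and shows that the bottom right entry has leading term $\alpha y^l$, with $\alpha$ the product of the off-diagonal entries that (c) makes nonzero. Your rank-one computation $E_{12}AE_{12}=A_{21}E_{12}$ is an explicit version of the paper's induction on $l$ and yields the same nonzero leading coefficient in the bottom right entry.
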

\begin{proof} In \cite{sha} the result was stated just for the field
$\C$ but for arbitrary dimension $d$. However the proof does work for
arbitrary fields $\F$ and general dimensions. Here we just give a summary
in the $d=2$ case, including one point in the proof which will be needed
later.

Define the representation $\rho:G_1*_HG_2\rightarrow SL(2,\F(y))$ as
equal to $\rho_2$ on $G_2$ but on $G_1$ we replace $\rho_1$ by the
conjugate representation $T\rho_1T^{-1}$ where $T$ is the diagonal
matrix $\mbox{diag}(1,y)$, and then extend to all of
$G_1*_HG_2$. Now it can be shown straightforwardly 
that any element not conjugate into $G_1\cup G_2$ is
conjugate in $G_1*_HG_2$ to something with normal form
\[g=\gamma_1\delta_1\ldots \gamma_l\delta_l\]
where all $\gamma_i\in G_1\setminus H$ and all $\delta_i\in G_2\setminus H$.
Induction on $l$ then yields that the entries of $g$ are Laurent polynomials
in $y^{\pm 1}$ with coefficients in $\F$ and with the bottom right 
hand entry of $g$ equal to $\alpha y^l+\ldots$ 
where all other terms are of
strictly lower degree in $y$. But it can be checked
that $\alpha$ is actually just a product of these respective
bottom left and top right entries, thus is a non zero element of $\F$
so this bottom right hand entry does not equal 1 and
$g$ is not the identity matrix.

Moreover the top left hand entry of $g$ is equal to a Laurent polynomial of
the form $\beta y^{l-1}$ plus lower order terms, for $\beta\in\F$ (although
unlike $\alpha$ above, $\beta$ could be zero). This means that the trace of
$g$ is also of the form $\alpha y^l$ plus lower order terms. 
\end{proof}   
  
We can use this result as follows:

\begin{co} \label{shas}
Let $\F$ be the field $\F_p(x)$. Suppose we have a pair of
2 by 2 matrices $A,B\in SL(2,\F)$ such that $\langle A,B\rangle$ is a
free group of rank 2 and $ABA^{-1}B^{-1}$ is a diagonal matrix. Then on 
introducing a transcendental element $y$ and setting $D=TAT^{-1}, C=TBT^{-1}$
for $T=\mbox{diag}(1,y)$, we have that $\langle A,B,C,D\rangle$ is
a faithful representation of the surface group $S_2$ in $SL(2,\F_p(x,y))$.
\end{co}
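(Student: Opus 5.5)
The plan is to realise $S_2=\langle a,b,c,d\,|\,[a,b][c,d]\rangle$ as the amalgam $G_1*_HG_2$ with $G_1=\langle c,d\rangle\cong F_2$, $G_2=\langle a,b\rangle\cong F_2$, and $H\cong\Z$ generated by $[a,b]=[c,d]^{-1}=[d,c]$. I would then apply Proposition \ref{shl} directly to this amalgam.

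\textbf{Setting up the representations.} Take $\rho_2:G_2\to SL(2,\F)$ with $a\mapsto A$, $b\mapsto B$. It is faithful because $\langle A,B\rangle$ is free of rank 2 by hypothesis. Take $\rho_1:G_1\to SL(2,\F)$ with $d\mapsto A$, $c\mapsto B$; it is faithful for the same reason.

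Condition (a) holds because $\rho_1([d,c])=ABA^{-1}B^{-1}=\rho_2([a,b])$. Condition (b) holds because this matrix is diagonal by hypothesis, and hence so is every element of $\rho_i(H)$.

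In the proof of Proposition \ref{shl} the representation $\rho$ equals $\rho_2$ on $G_2$ and $T\rho_1T^{-1}$ on $G_1$. So $\rho(a)=A$, $\rho(b)=B$, $\rho(d)=TAT^{-1}=D$ and $\rho(c)=TBT^{-1}=C$. Since $T$ is diagonal it commutes with $[A,B]$, so $[D,C]=T[A,B]T^{-1}=[A,B]$. This confirms that $[A,B][C,D]=I$, so $\rho$ is the representation of $S_2$ sending $a,b,c,d$ to $A,B,C,D$. Faithfulness of $\rho$ is then exactly the conclusion of Proposition \ref{shl}, and its image is $\langle A,B,C,D\rangle$.

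\textbf{Condition (c).} This is the main point, since it is not among the stated hypotheses and must be deduced from them. We need two facts: every element of $\langle A,B\rangle$ with zero bottom left entry lies in $\langle [A,B]\rangle$, and every element with zero top right entry lies in $\langle [A,B]\rangle$. Here $\rho_1(G_1)$ and $\rho_2(G_2)$ are the same group $\langle A,B\rangle$, and $\rho_1(H)=\rho_2(H)=\langle [A,B]\rangle$.

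The elements of $\langle A,B\rangle$ with zero bottom left entry form the intersection $U$ of $\langle A,B\rangle$ with the upper triangular Borel subgroup of $SL(2,\F)$. That Borel subgroup is metabelian, so $U$ is a solvable subgroup of a free group and hence is cyclic. Moreover $U$ contains the infinite order element $[A,B]$, which is diagonal. In $F_2$ the commutator $[a,b]$ is not a proper power, so the maximal cyclic subgroup containing it is $\langle [a,b]\rangle$ itself. Therefore $U=\langle [A,B]\rangle$, which is the image of $H$. The same argument with the lower triangular Borel subgroup handles zero top right entries.

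Hence every $g\in G_1\setminus H$ has $\rho_1(g)$ with non-zero bottom left entry, and every $g\in G_2\setminus H$ has $\rho_2(g)$ with non-zero top right entry. With (a), (b) and (c) established, Proposition \ref{shl} with $\F=\F_p(x)$ gives the faithful embedding of $S_2$ into $SL(2,\F_p(x,y))$.
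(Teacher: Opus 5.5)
Your proposal is correct and follows essentially the paper's proof: you apply Proposition \ref{shl} to $S_2$ viewed as two copies of $F_2$ amalgamated along the commutator, and you deduce (c) from the solubility of the triangular subgroups together with the fact that soluble subgroups of a free group are cyclic. You are slightly more careful than the paper in two places: you make explicit that $[a,b]$ is not a proper power, which is needed to conclude that the cyclic subgroup is exactly $\langle ABA^{-1}B^{-1}\rangle$, and you label $G_1,G_2$ so that the factor conjugated by $T$ really is the one sent to $C,D$.
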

\begin{proof}
This is the case in the above proposition where $G_1=\langle a,b\rangle$ and 
$G_2=\langle d,c\rangle$ are both copies of the free group $F_2$, 
where $a=d$ and $b=c$. On setting $H$ to be the cyclic subgroup
generated by $aba^{-1}b^{-1}=dcd^{-1}c^{-1}$, we see that $G_1*_H G_2$ is 
$S_2$. We let $\rho_1$ send $a,b$ to $A,B$ and $\rho_2$ send $d,c$ to $A,B$,
thus (a) and (b) are satisfied. But (c) is satisfied too, because
an element $X$ of $SL(2,\F)$ with an off diagonal entry zero would have the
property that $\langle X,ABA^{-1}B^{-1}\rangle$ is a soluble group, which
which cannot happen in the free group on $A,B$ if
$X\notin \langle ABA^{-1}B^{-1}\rangle$.
\end{proof}

We now look for matrices of the required form.
\begin{thm} \label{matfrm}
Let $\F$ be any infinite field and let $c,h,d,\delta$ be non zero
elements of $\F$. Set $X=1-d\delta h+d^2 h^2$, $Y=\delta^2-d \delta h+h^2$
and suppose that $X$ and $Y$ are also non zero. Then on defining
\[
A=\sma{cc}\frac{dY}{X}&\frac{d\delta h(1-d^2)+d^2\delta^2-1}{cX}\\
c&d\fma
\mbox{ and }B=\sma{cc}\frac{\delta X}{Y}&
\frac{d\delta(1-\delta^2)+h(d^2\delta^2-1)}{cY}\\
ch&\delta\fma,\]
we have that $A,B\in SL(2,\F)$ with
$\mbox{tr}(A)=d(X+Y)/X$, $\mbox{tr}(B)=\delta (X+Y)/Y$
and $\mbox{tr}(AB)=(d\delta(1+h^2)-h)(X+Y)/(XY)$. We also have
\[AB=\sma{cc}\frac{d\delta(1+h^2)-h}{X}&
\frac{dh(\delta^2-1)+\delta(d^2-1)}{cX}\\
\frac{c(\delta+dh^3)}{Y}&\frac{d\delta(1+h^2)-h}{Y}
\fma,
BA=\sma{cc}\frac{d\delta(1+h^2)-h}{Y}&
\frac{dh(\delta^2-1)+\delta(d^2-1)}{cY}\\
\frac{c(\delta+dh^3)}{X}&\frac{d\delta(1+h^2)-h}{X}\fma\]
\[\mbox{ and }ABA^{-1}B^{-1}=\sma{ccc}\frac{Y}{X}&0\\0&\frac{X}{Y}\fma.\]
\end{thm}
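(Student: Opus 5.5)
This is a direct verification by computation, and the plan is to organise it so that each identity reduces to a polynomial identity in $c,d,\delta,h$. Throughout I use the standing hypotheses that $c,X,Y\neq 0$, so every displayed entry lies in $\F$. The variable $c$ scales out completely. Conjugating by $\mbox{diag}(1,c)$ multiplies the bottom-left entries by $c$ and divides the top-right entries by $c$. So I will verify everything for $c=1$, and conjugation then gives the general case; traces and the diagonal commutator are unaffected. The infinitude of $\F$ is not needed for these identities. It presumably serves later to choose parameters, so I treat the statement as a set of polynomial identities over $\Z[d,\delta,h]$ localised at $X,Y$.

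First I check the determinants. For $A$ the condition is
\[
dY\cdot d - X\cdot\bigl(d\delta h(1-d^2)+d^2\delta^2-1\bigr)/X = X ,
\]
that is, $d^2Y - \bigl(d\delta h(1-d^2)+d^2\delta^2-1\bigr) = X$. Expanding,
\[
d^2\delta^2-d^3\delta h+d^2h^2-d\delta h+d^3\delta h-d^2\delta^2+1=1-d\delta h+d^2h^2=X .
\]
For $B$ the condition is $\delta^2X-h\bigl(d\delta(1-\delta^2)+h(d^2\delta^2-1)\bigr)=Y$, which expands in the same way. The traces then follow at once: $dY/X+d=d(X+Y)/X$, and similarly for $B$.

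Next I multiply out $AB$ entry by entry over the common denominator $XY$. Each entry is a polynomial identity, and the target form predicts cancellation of a factor $Y$ or $X$ in the numerator. The cleanest check is to verify that the numerator equals the claimed entry times $XY$. For example, the $(1,1)$ entry requires
\[
dY\cdot\delta X + h\bigl(d\delta h(1-d^2)+d^2\delta^2-1\bigr)Y = \bigl(d\delta(1+h^2)-h\bigr)Y ,
\]
so after dividing by $Y$ I only need $d\delta X + h(\cdots) = d\delta(1+h^2)-h$, a short expansion. The other three entries are handled the same way. Then $BA$ is obtained similarly; its diagonal is swapped relative to $AB$, and this also gives $\mbox{tr}(AB)$ as the sum of the two diagonal entries, namely $(d\delta(1+h^2)-h)(1/X+1/Y)$.

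For the commutator I avoid inverting anything explicitly. Since $\det=1$, the claim $ABA^{-1}B^{-1}=\mbox{diag}(Y/X,X/Y)$ is equivalent to
\[
AB=\mbox{diag}(Y/X,X/Y)\,BA .
\]
The right-hand side multiplies the first row of $BA$ by $Y/X$ and the second row by $X/Y$. Comparing with the displayed formulas, this turns the first row of $BA$ into that of $AB$ and likewise the second. So the commutator claim is immediate once the $AB$ and $BA$ entries are established.

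The only real work, and the main obstacle, is verifying the off-diagonal entries of $AB$ and $BA$. There the numerators are degree-five polynomials and the factorisations through $X$ or $Y$ are not obvious. I would organise each check by collecting powers of $h$, or simply confirm it by computer algebra, since everything is a finite identity in $\Z[d,\delta,h]$.
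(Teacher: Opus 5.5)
Your proposal is correct and takes the same approach as the paper, which simply says the identities can be confirmed by direct calculation; your reduction to $c=1$ by conjugating with $\mbox{diag}(1,c)$, and your derivation of the commutator from $AB=\mbox{diag}(Y/X,X/Y)\,BA$, are both valid and shorten that calculation. You overestimate the difficulty of the off-diagonal entries, though: the $X$ or $Y$ cancels immediately there too. For instance $(AB)_{12}=(dN_B+\delta N_A)/(cX)$, where $N_A,N_B$ are the numerators of the top-right entries of $A,B$, and $dN_B+\delta N_A=dh(\delta^2-1)+\delta(d^2-1)$ is a short expansion, so no non-obvious factorisation is needed.
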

\begin{proof} Although the form of these matrices was found by working
  backwards in two stages by first solving for $AB$ and $BA$, then $A$
  and $B$, we can confirm this by direct calculation (by computer if
  preferred).
\end{proof}

It remains to be seen that we can find matrices $A,B$ of the above form
which generate a free group of rank 2 when $\F=\F_p(x)$.
To do this, we can take the 
discrete valuation 
$v$ on $\F_p(x)$ given by minus the degree, so 
$(a_m x^m+\ldots +a_0)/(b_n x^n+\ldots +b_0)$ has valuation $n-m$ if
$a_m,b_n\neq 0$. 
Recall that a discrete valuation $v:\F\rightarrow
\Z\cup\{\infty\}$ on a field $\F$ satisfies:\\
(1) $v(x)=\infty$ if and only if $x=0$\\
(2) $v(xy)=v(x)+v(y)$\\
(3) $v(x+y)\geq \mbox{min}(v(x),v(y))$. Moreover if $v(x)\neq v(y)$
then this is an equality.\\
The set of elements ${\mathcal O}_v=\{x\in \F:v(x)\geq 0\}$ forms a
subring of $\F$, called the valuation ring, which is a principal ideal
domain and an element $\pi$ with $v(\pi)=1$ is called a uniformiser.

We can then take the metric completion $k$ of
$\F_p(x)$ to obtain a local field, with the same valuation
and which also acts on its Bass - Serre tree
$T_{p+1}$. (The above is evaluation at zero: perhaps the more common
valuation used is that at infinity, giving $k=\F_p((x))$ but this is
obtained anyway by substituting $1/x$ for $x$ in everything below.)  
The results of \cite{con} then tell us when a pair of matrices
$A,B\in SL(2,k)$ generate a free and discrete group. A necessary condition
is that the valuation $v$ of the traces $A,B$ and $AB$ are all negative, as
otherwise these will act on $T_{p+1}$ as elliptic elements. Otherwise the
translation length of a matrix $M\in SL(2,k)$ is $-2v(\mbox{tr}\,M)$.
Now if we can find matrices $A,B$ in the above form where
the valuation of the three traces $\mbox{tr}(A),\mbox{tr}(B),\mbox{tr}(AB)$
are all equal and negative (say $-1$) then we are in Case 2(i) of
Proposition 3.5 in \cite{con}, with this satisfying the hypothesis in
Corollary 3.6 which shows that $\langle A,B\rangle$ is free of rank 2 and
discrete.
\begin{thm} \label{mat}
If $p$ is any odd prime then the following matrices 
$A,B,C,D$ in 
$SL(2,\F_p(x,y))$ generate a faithful representation of the genus 2 surface
group, where we have $ABA^{-1}B^{-1}=DCD^{-1}C^{-1}$.
\begin{eqnarray*}
A=\sma{cc} \frac{1-2x^2-2x^3}{x(x-1)}&\frac{1-2x^2-x^3-x^4}{x^3(x-1)}\\
1&1/x^2\fma &,&
B=\sma{cc} \frac{x^2-1}{x-2x^3-2x^4}&
\frac{1+2x-x^2-3x^3-2x^4}{x^2(2x^3+2x^2-1)}\\x&1+x\fma,\\
D=\sma{cc} \frac{1-2x^2-2x^3}{x(x-1)}&\frac{1-2x^2-x^3-x^4}{yx^3(x-1)}\\
y&1/x^2\fma &,&
C=\sma{cc} \frac{x^2-1}{x-2x^3-2x^4}&
\frac{1+2x-x^2-3x^3-2x^4}{yx^2(2x^3+2x^2-1)}\\yx&1+x\fma.
\end{eqnarray*}
\end{thm}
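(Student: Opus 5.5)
The plan is to recognise the four matrices as a specialisation of Theorem \ref{matfrm} and then feed the result into Corollary \ref{shas}, using the freeness criterion from \cite{con} to supply the hypothesis that $\langle A,B\rangle$ is free of rank 2.

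First I identify the parameters. Comparing bottom rows, $A$ corresponds to $c=1$, $d=1/x^2$. Then $B$ has bottom row $(ch,\delta)=(x,1+x)$, so $h=x$ and $\delta=1+x$. With these values,
\[X=1-d\delta h+d^2h^2=\frac{1-x}{x^2},\qquad Y=\delta^2-d\delta h+h^2=\frac{2x^3+2x^2-1}{x}.\]
All of $c,h,d,\delta,X,Y$ are nonzero in $\F_p(x)$. A routine substitution into the entries given in Theorem \ref{matfrm} should reproduce the stated $A$ and $B$; for instance $dY/X=(2x^3+2x^2-1)/(x(1-x))$ agrees with the stated top-left entry of $A$. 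By Theorem \ref{matfrm}, $A,B\in SL(2,\F_p(x))$ and $ABA^{-1}B^{-1}=\mathrm{diag}(Y/X,X/Y)$ is diagonal. Conjugating by $T=\mathrm{diag}(1,y)$ multiplies the bottom-left entry by $y$ and divides the top-right entry by $y$. So $D=TAT^{-1}$ and $C=TBT^{-1}$, exactly as in Corollary \ref{shas}, and the relation $ABA^{-1}B^{-1}=DCD^{-1}C^{-1}$ follows because $T$ commutes with the diagonal commutator.

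It remains to show that $\langle A,B\rangle$ is free of rank 2. I will use the valuation $v$ given by minus the degree, as described before the statement, and compute the valuations of the three traces from the formulas in Theorem \ref{matfrm}.
\begin{itemize}
\item The basic valuations are $v(d)=2$, $v(\delta)=-1$ and $v(X)=1$. Here $p$ odd is essential: the leading coefficient $2$ of $Y$ is nonzero, so $v(Y)=-2$.
\item Since $v(X)\neq v(Y)$, the strict form of the ultrametric inequality gives $v(X+Y)=-2$.
\item For the product trace, $d\delta(1+h^2)-h=(1+x+x^2)/x^2$, which has valuation $0$.
\end{itemize}
Hence
\[v(\mathrm{tr}A)=2-2-1=-1,\quad v(\mathrm{tr}B)=-1-2+2=-1,\quad v(\mathrm{tr}AB)=0-2-1+2=-1.\]
The three traces therefore have equal negative valuation. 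This places $A,B$ in Case 2(i) of Proposition 3.5 of \cite{con}, and Corollary 3.6 there gives that $\langle A,B\rangle$ is free of rank 2 and discrete in $SL(2,k)$, hence free in $SL(2,\F_p(x))$. Corollary \ref{shas} then yields faithfulness of $\langle A,B,C,D\rangle$ as a representation of $S_2$, with $y$ transcendental over $\F_p(x)$.

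The main obstacle I expect is the bookkeeping in the substitution: the top-right entries of $A$ and $B$ must be checked against Theorem \ref{matfrm}, up to signs such as $x-2x^3-2x^4=-x(2x^3+2x^2-1)$. The other point needing care is confirming that every leading coefficient used in the valuation computations is nonzero mod $p$; only the coefficient $2$ is at risk, and this is why $p=2$ is excluded.
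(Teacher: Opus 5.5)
Your proposal is correct and follows the paper's proof: you specialise Theorem \ref{matfrm} at $c=1$, $d=1/x^2$, $\delta=x+1$, $h=x$, check that $\mathrm{tr}(A)$, $\mathrm{tr}(B)$ and $\mathrm{tr}(AB)$ all have valuation $-1$ when $p\neq 2$, deduce freeness from Case 2(i) of Proposition 3.5 and Corollary 3.6 of \cite{con}, and conclude with Corollary \ref{shas}. Your values $X=(1-x)/x^2$ and $Y=(2x^3+2x^2-1)/x$, and the resulting trace valuations, are accurate, so you have filled in the routine verification the paper leaves implicit.
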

\begin{proof}
The form of $A$ and $B$ come from the previous
result with ($c=1$ without loss of generality and)
$d=1/x^2$, $\delta=x+1$ and $h=x$, whereupon
we do find (for $p\neq 2$) that the traces of $A,B,AB$ all have valuation
$-1$. Hence $\langle A,B\rangle$ is a rank 2 free group in the required
form for the application of Corollary \ref{shas}.

Although not needed for this proof, we briefly
indicate how $d,\delta,h$ were chosen. Looking at the matrices in Theorem
\ref{mat},
a necessary condition for $\langle A,B\rangle$ to be discrete and free is that
$ABA^{-1}B^{-1}$ is loxodromic, so we require $v(Y)\neq v(X)$. On picking 
$v(X)=1$ and $v(Y)=-2$ (a somewhat ad hoc choice, obtained by examining a
specific case that was found to work by computation), if we want the valuation
of $\mbox{tr}(A)=d(X+Y)/X$ to be $-1$ under our choices for $v(X)$ and $v(Y)$
then this happens if and only if $v(d)=2$. Similarly the
valuation of $\mbox{tr}(B)=\delta(X+Y)/Y$ being $-1$ under the same condition
is equivalent to $v(\delta)=-1$.
If we now try to satisfy $v(X)=v(1-d\delta h+d^2 h^2)=1$ with $v(d)=2$ and
$v(\delta)=-1$ then we cannot take $v(h)\geq 0$ because this implies that
$v(X)=0$. However $v(h)=-1$ would work
if there is some cancellation in forming $1-d\delta h$, each term of which has
valuation 0 but whose difference we will now assume has valuation $1$. 
These values
for $v(d),v(\delta),v(h)$ also give $v(Y)=-2$ as required if there is no  
cancellation when adding $\delta^2$ and $h^2$, both of which have valuation
$-2$.

As for obtaining $v(\mbox{tr}(AB))=-1$, this now happens if 
$v(d\delta+h(d\delta h-1))$ is 0. As we have assumed above one case of
cancellation in forming $d\delta h-1$, this will be true.

To ensure these conditions hold for specific choices of $d,\delta,h$, we aim
for the simplest expressions we can find. If $d\delta h=(x+1)/x$ then we
will have $v(d\delta h)=0$ but $v(d\delta h-1)=v(1/x)=1$ which gives us
the required cancellation. Thus on choosing
$\delta=x+1, h=x$ and therefore $d=1/x^2$, we see that $\delta^2+h^2=
2x^2+2x+1$, so if $p\neq 2$ then we do not have cancellation in forming this
sum and so all the required conditions above are satisfied.

Unfortunately this argument cannot work in $\F_2(x)$ because there will
always be cancellation when adding elements with the same valuation.
To deal with this case, we tried further possibilities for the valuations,
this time looking for the traces of $A,B,AB$ each to have valuation $-2$. 
On trying $v(d)=4,v(\delta)=-2,v(h)=-2$, we see that this would imply
$v(X)>0$ (because of the cancellation between 1 and $d\delta h$),
whereas $v(Y)>-4$ (because of cancellation between $\delta^2$ and $h^2$).
If we could arrange to have lots of cancellation between $d\delta h$ and 1
so that $v(d\delta h+1)=5$, giving
$v(X)=4$, and some cancellation between $\delta^2$ and $h^2$ giving
$v(\delta^2+h^2)=-2$ and thus $v(Y)=-2$,
then we obtain suitable traces for $A,B$ and $v(\mbox{tr}(AB))$ will also
be $-2$ if $v(Z)=1$, which will hold from imposing $v(d\delta h+1)=5$ above.

Thus we trying setting $\delta=x^2$ and $h=x^2+x+1$ to give us the
correct amount of cancellation in $\delta^2+h^2$. Then $d\delta h$ has to be 
something like $x^5/(x^5+1)$ to provide enough cancellation when adding it
to 1. Thus we also set $d=x^3/((x^2+x+1)(x^5+1))$ and these values satisfy
all of the above conditions, hence we have shown:
\end{proof}
\begin{co} \label{mat2}
The following matrices 
$A,B,C,D$ in 
$SL(2,\F_2(x,y))$ generate a faithful representation of the genus 2 surface
group, where we have $ABA^{-1}B^{-1}=DCD^{-1}C^{-1}$.
\begin{eqnarray*}
A&=&\sma{cc} 
\frac{x^8 + x^7 + x^5 + x^4 + x^3}{x^6 + x^5 + 1}&   
\frac{x^{13} + x^{11} + x^2 + x + 
    1}{(x^6 + x^5 + 1) (x^5 +1) (x^2 + x + 1)}\\
1&\frac{x^3}{(x^5 +1) (x^2 + x + 1)}\fma,\\
B&=&\sma{cc}
\frac{x^8 + x^7 + x^2}{(x^7+x^2+1) (x^5 + 1)}&\frac{x^{12} + x^{10} + x^9 + 
x^5 + x^4 + x^2+ 1}{(x^7 + x^2 + 1)(x^5 +1) (x^2 + x + 1)}\\
x^2 + x + 1&x^2\fma,
\end{eqnarray*}
and $D,C$ are the conjugates of $A,B$ respectively by $\mbox{diag}(1,y)$.
\end{co}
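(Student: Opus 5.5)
The approach is to apply Theorem \ref{matfrm} over $\F=\F_2(x)$ with $c=1$, $\delta=x^2$, $h=x^2+x+1$ and $d=x^3/((x^2+x+1)(x^5+1))$, exactly as set up at the end of the proof of Theorem \ref{mat}. Then we check that the valuations of $\mbox{tr}(A)$, $\mbox{tr}(B)$, $\mbox{tr}(AB)$ are all equal to $-2$, so that Corollary 3.6 of \cite{con} (Case 2(i) of Proposition 3.5 there) gives that $\langle A,B\rangle$ is free of rank 2. Finally we invoke Corollary \ref{shas}.

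\textbf{Step 1: the matrix entries.} Substitute the choices into the formulas of Theorem \ref{matfrm}. Here $d\delta h=x^5/(x^5+1)$, so in characteristic 2
\[
1-d\delta h=\frac{1}{x^5+1}, \qquad X=\frac{1}{x^5+1}+d^2h^2=\frac{(x^5+1)(x^2+x+1)^2\cdot\frac{1}{x^5+1}\cdot(x^5+1)+x^6}{(x^5+1)^2(x^2+x+1)^2}
\]
after clearing denominators. Also $Y=\delta^2+d\delta h+h^2$, with $\delta^2+h^2=x^2+1$. This is a routine rational-function simplification (computer-checkable), and it should reproduce the stated entries of $A$ and $B$, together with the non-vanishing of $c,h,d,\delta,X,Y$. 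Theorem \ref{matfrm} then gives $A,B\in SL(2,\F_2(x))$ with $ABA^{-1}B^{-1}=\mbox{diag}(Y/X,X/Y)$ diagonal.

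\textbf{Step 2: the valuations.} Use the valuation $v=$ minus degree. We have $v(d)=4$, $v(\delta)=v(h)=-2$, and $v(d\delta h+1)=v(1/(x^5+1))=5$. Since $v(d^2h^2)=4<5$, this gives $v(X)=4$. Likewise $v(x^2+1)=-2<v(d\delta h)=0$, so $v(Y)=-2$. It follows that
\[
v(\mbox{tr}A)=v(d)+v(X+Y)-v(X)=4-2-4=-2, \qquad v(\mbox{tr}B)=-2-2+2=-2.
\]
For $\mbox{tr}(AB)$ we need $v\bigl(d\delta(1+h^2)-h\bigr)=v\bigl(d\delta+h(d\delta h-1)\bigr)$. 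The first term has valuation $2$ and the second has valuation $-2+5=3$, so the sum has valuation $2$. Hence
\[
v(\mbox{tr}\,AB)=2+(-2)-(4-2)=-2.
\]
Since all three valuations are equal and negative, we are in the case covered by \cite{con}, so $\langle A,B\rangle\cong F_2$ and is discrete. The loxodromic commutator is consistent with $v(X)\neq v(Y)$.

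\textbf{Step 3: conclusion.} Corollary \ref{shas} applies verbatim, with $D=TAT^{-1}$ and $C=TBT^{-1}$ for $T=\mbox{diag}(1,y)$. This yields the faithful representation of $S_2$ in $SL(2,\F_2(x,y))$ with $ABA^{-1}B^{-1}=DCD^{-1}C^{-1}$.

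\textbf{Main obstacle.} The main obstacle is Step 1: confirming that the explicit displayed entries agree with the Theorem \ref{matfrm} formulas, including factorisations such as $X$ having numerator related to $x^6+x^5+1$ and $Y$ involving $x^7+x^2+1$. I would do this by direct symbolic computation over $\F_2$. The valuation bookkeeping in Step 2 is then straightforward, needing only care that no unexpected cancellation occurs beyond what the degree computations show.
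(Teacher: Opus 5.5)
Your proposal is correct and is essentially the paper's own argument: the same parameters $c=1$, $\delta=x^2$, $h=x^2+x+1$, $d=x^3/((x^2+x+1)(x^5+1))$ in Theorem \ref{matfrm}, the same valuation count $v(\mbox{tr}\,A)=v(\mbox{tr}\,B)=v(\mbox{tr}\,AB)=-2$ so that \cite{con} makes $\langle A,B\rangle$ free of rank 2, and then Corollary \ref{shas}. Your value $v(d\delta(1+h^2)-h)=2$ is the right one (the paper's text says $1$, apparently a slip), but your cleared-denominator display for $X$ has $d^2$ where $d^2h^2$ belongs: since $dh=x^3/(x^5+1)$, one gets $X=(x^6+x^5+1)/(x^5+1)^2$ and $Y=(x^7+x^2+1)/(x^5+1)$, which do reproduce the stated entries and the valuations $4$ and $-2$ you actually use.
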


We now show that the necessary condition for a proper action on a finite 
product of locally finite trees, given in the previous section, is
satisfied by the surface group $S_g$.

\begin{co} \label{moreacc}
If $S_g$ is the closed orientable hyperbolic surface group of genus $g\geq 2$ 
then for any finite collection of non-identity elements
$\gamma_1,\ldots ,\gamma_m\in S_g$ there is an action of $S_g$
on some locally finite tree which is minimal, faithful and such that each of
$\gamma_1,\ldots,\gamma_m$ act loxodromically.
\end{co}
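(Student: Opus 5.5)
The plan is to use the faithful representation $\rho:S_2\to SL(2,\F_p(x,y))$ of Theorem \ref{mat} (or Corollary \ref{mat2} when $p=2$), together with a suitable discrete valuation on $K=\F_p(x,y)$. We first reduce to genus 2. Since $S_g$ is a finite index subgroup of $S_2$, a faithful action of $S_2$ restricts to a faithful action of $S_g$, and loxodromic elements stay loxodromic. The core $C_{S_g}(T)$ is then a minimal invariant subtree. Faithfulness on the core needs care; we return to this below. So from now on we work with $S_2$, or we work directly with $\rho|_{S_g}$.

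Given $\gamma_1,\ldots,\gamma_m\neq 1$, their images $\rho(\gamma_i)$ are not $\pm I$. (Note $-I\notin\rho(S_2)$ except possibly as the image of a nontrivial element, which faithfulness and torsion-freeness rule out.) We want a single discrete valuation $w$ on $K$ with $w(\mathrm{tr}\,\rho(\gamma_i))<0$ for all $i$, so that every $\gamma_i$ is loxodromic on the Bruhat--Tits tree $T_w$ of the completion, which is locally finite provided the residue field is finite. If $\mathrm{tr}\,\rho(\gamma_i)$ is transcendental over $\F_p$, it is a nonconstant rational function. The approach is to choose $w$ as a valuation centred suitably, for example via specialising $y$ to a generic transcendental-like value in $\overline{\F_p}((x))$, or by taking $w$ the $y$-adic valuation $-\deg_y$ composed with a valuation on $\F_p(x)$, and then passing to a rank-one discrete valuation with finite residue field. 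For traces already in $\F_p$ (hence $\mathrm{tr}=\pm2$ or a constant), we instead need a valuation under which the element is still loxodromic, which is impossible. So we must show that no nontrivial element has constant trace, or handle such elements by conjugating. The Shalen computation in the proof of Proposition \ref{shl} helps here. For $g$ not conjugate into the vertex groups, the trace is $\alpha y^l+\ldots$ with $l\geq1$, so $-\deg_y$ makes the valuation negative. For $g$ conjugate into $\langle A,B\rangle$, the trace lies in $\F_p(x)$ and is loxodromic for the valuation $v$ used there, by \cite{con}. The plan is to take the product of the finitely many trees coming from these valuations and pass to one tree by a common valuation. Combining $-\deg_y$ with $v$ lexicographically gives a rank-two valuation, which is the technical point. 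Instead I would use a discrete valuation on $K$ extending $v$ under a substitution $y=x^{-N}u$ for large $N$ and a new variable, which makes the leading $y$-term dominate for the finitely many $\gamma_i$ considered.

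Having $w$, the group $SL(2,K)$ acts on $T_w$. Let $T$ be the core of $\rho(S_2)$. This is minimal, locally finite (indeed bounded valence by Proposition \ref{equprp}), and each $\gamma_i$ is loxodromic. Faithfulness is the main obstacle. The kernel $N$ of the action on the core is a normal subgroup acting trivially on the core. If $N\neq1$ then, since $S_2$ is torsion-free hyperbolic, $N$ contains a loxodromic element $h$ of the action as in the proof of Theorem \ref{injcloc}, contradicting triviality. The argument runs as follows. An element of $SL(2,K)$ fixing a bi-infinite geodesic pointwise and fixing all translates of it has trace constrained, but fixing the whole core, which has at least three ends, forces the element to lie in the kernel of $SL(2,k_w)$ acting on its tree, namely $\{\pm I\}$. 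Here we use that the pointwise stabiliser of a subtree with infinitely many ends (the limit set spans enough boundary points) in $PGL_2$ is trivial, since an element fixing three boundary points of the Bruhat--Tits tree fixes three points of $\mathbb P^1(k_w)$ and is scalar. This gives faithfulness, with $\pm I$ handled by torsion-freeness. The step I expect to be hardest is producing the single discrete valuation making all $\gamma_i$ loxodromic simultaneously, including those conjugate into the vertex groups.
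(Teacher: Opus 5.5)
Your overall plan matches the paper's. You keep the valuation $v$ on $\F_p(x)$, so that anything conjugate into $\langle A,B\rangle$ or $\langle C,D\rangle$ is loxodromic by \cite{con}. You make the leading term $\alpha y^l$ of the trace from the proof of Proposition \ref{shl} dominate for the other $\gamma_i$. You then pass to the core and get faithfulness because a M\"obius transformation of $\mathbb P^1(k)$ fixing three points comes from $\pm I$. Drop the sentence claiming that a nontrivial kernel must contain a loxodromic element ``as in Theorem \ref{injcloc}'': that theorem relied on properness on a product and proves nothing here. The three-ends argument you give afterwards is the correct one and is the paper's.

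What is missing is the step you flag as hardest, and it is precisely the step that makes the tree locally finite. You never exhibit a rank-one discrete valuation on $\F_p(x,y)$ with \emph{finite} residue field, and none of your candidates has one:
\begin{itemize}
\item $-\deg_y$ has residue field $\F_p(x)$;
\item $\overline{\F_p}((x))$ has residue field $\overline{\F_p}$;
\item the lexicographic combination has rank two;
\item for $y=x^{-N}u$ with $u$ a new indeterminate you do not specify the valuation, and the obvious extension of $v$ (the Gauss valuation, $w(u)=0$) has residue field $\F_p(\bar u)$, giving a tree of infinite valence.
\end{itemize}

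The paper's fix is short. Let $k$ be the completion of $\F_p(x)$ at $v$. Since $k$ is uncountable and only countably many of its elements are algebraic over $\F_p(x)$, there is $y_0\in k$ transcendental over $\F_p(x)$. For every $n$, the substitution $y\mapsto y_0x^n$ is then a field embedding $\F_p(x,y)\injects k$. So $\rho$ stays faithful and $S_2$ acts on the $(p+1)$-regular tree of $k$.

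Since $v(y_0x^n)=v(y_0)-n$, taking $n$ large makes $\alpha y^l$ have strictly smaller valuation than every lower-order term of $\mathrm{tr}\,\rho(\gamma_i)$. That trace therefore has negative valuation, simultaneously for the finitely many $\gamma_i$ not conjugate into a vertex group. (The paper normalises $v(x)=-1$, so your substitution should read $y=x^{N}u$, not $x^{-N}u$.) Traces of elements conjugate into the vertex groups lie in $\F_p(x)$ and are unaffected. With this step supplied, the rest of your argument goes through as in the paper.
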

\begin{proof}
We show this for the group $S_2$ as all other $S_g$ are finite index
subgroups of $S_2$, so the action will still be minimal. We first suppose
that we are given one non-identity element $\gamma$ in $S_2$.

For any prime $p$,
we regard the global field $\F_p(x)$ as sitting inside its metric completion,
the local field $k$, whereupon $SL(2,k)$ also acts on the $p+1$ regular
tree by automorphisms, and does so faithfully apart from $-I$.
Given the above matrices $A,B,C,D\in SL(\F_p(x,y))$, we can regard
them as lying in $SL(2,k)$ on taking $y$ to be any element in $k$ which is
transcendental over $\F_p(x)$ and such elements exist by countability
considerations. We thus obtain a faithful action of $S_2$ on the tree
$T_{p+1}$. Now the free subgroups $\langle A,B\rangle$ and $\langle C,D\rangle$
both act purely loxodromically by construction, so our element $\gamma$
will automatically be a loxodromic element if it is conjugate into either of
these subgroups. Otherwise, as mentioned at the end of the proof of
Proposition \ref{shl}, the trace of $\gamma$
will be a Laurent polynomial in the variable $y$ of the form 
$\alpha y^l$ plus lower order terms in $y$,
where $l>0$ and
$\alpha$ is a non zero element of $\F_p(x)$. Now this expression does not 
change if we change the element $y$ in $k$, as long as it is still 
transcendental over $\F_p(x)$.
To ensure that $\gamma$ is loxodromic here, we require that its trace
has negative valuation. But if $y$ is a transcendental element of $k$ then
for $n\in\Z$ we have that $z=yx^n\in k$ is still transcendental over
$\F_p(x)$ and with $v(z)=v(y)-n$. Thus regardless of $v(\alpha)$ or the
other coefficients in the Laurent polynomial for $\mbox{tr}(\gamma)$, if
we take $n$ large enough and replace $y$ with $z$ then this trace will have
negative valuation. Now suppose that we are given finitely many non-identity
elements $\gamma_1,\ldots ,\gamma_m$ of $S_2$. The above argument works
with $\alpha$ and $l$ depending on the element $\gamma_i$, but we can simply
take $n$ large enough so that all of these finitely many traces have negative
valuation.

Finally we are not claiming that this action will definitely be minimal. But
if we restrict to its core, we have a minimal action in
which loxodromic elements remain loxodromic. Thus we can only lose
faithfulness of the action if there were some non identity element in $S_2$
which is acting elliptically on $T_{p+1}$ in the original action but
which acts as the identity when restricted to the core. However this action
of $SL(2,k)$ extends to an action on the boundary of the tree $T_{p+1}$, 
which is projective space $\mathbb P^1(k)$, where elements act as
M\"obius transformations. Such a transformation fixes
at most two ends, whereas if this element were acting as the identity on
the core, it would fix all ends of the core. But $A$ and $B$ provide two
independent loxodromic elements in this action, thus the core has more than
two ends and hence this restriction is still faithful.
\end{proof}

We finish by noting that the question of whether
the surface group $S_g$ can act properly on a
finite product of locally finite trees
could also be asked for any hyperbolic group $H$ that is not virtually
free. However these two cases are very much connected, on
recalling the famous open question credited to Gromov asking 
whether such a group always contains a surface subgroup. If we could find
such a group $H$ exhibiting a positive answer to our more general
question then either $H$ contains a surface subgroup $S_g$ for some
$g\geq 2$, thus $S_g$ also acts properly on a finite product of locally finite
trees for all $g\geq 2$, or we would have a counterexample to Gromov's
question. As for non-hyperbolic groups, it would also be interesting to ask
which (finitely generated) RAAGs have a proper action
on a finite product of locally finite trees. The only answer we know of
here is that direct products of free groups do (and indeed they do not
contain surface groups $S_g$).

\bibliographystyle{plain}

\end{document}